\newcommand{\powser}[1]{[\![#1]\!]}
\newcommand{\pdiv}{$p$-divisible }
\newcommand{\G}{\mathbb{G}}
\newcommand{\Sect}{\mathcal{O}}
\newcommand{\Q}{\mathbb{Q}}
\newcommand{\Z}{\mathbb{Z}}
\newcommand{\QZ}{\Q_p/\Z_p}  
\newcommand{\Zp}[1]{\Z/p^{#1}}
\newcommand{\R}{\mathbb{R}}
\newcommand{\al}{\alpha}
\newcommand{\Lk}{\Lambda_k}
\newcommand{\ti}{\tilde}
\newcommand{\wti}{\widetilde}
\newcommand{\bq}{\bar{q}}
\newcommand{\bA}{\bar{A}}
\newcommand{\lra}[1]{\overset{#1}{\longrightarrow}}
\newcommand{\Prod}[1]{\underset{#1}{\prod}}
\newcommand{\Coprod}[1]{\underset{#1}{\coprod}}
\newcommand{\Colim}[1]{\underset{#1}{\colim}}
\newcommand{\Lim}[1]{\underset{#1}{\lim}}
\newcommand{\E}{E_{n}}
\newcommand{\LE}{L_{K(t)}E_n}
\newcommand{\Bprime}{B_{t}^{'}}
\newcommand{\Chat}{\hat{C_t}}
\def \mmod{/\mkern-3mu /}
\title{Transchromatic Twisted Character Maps}
\author{
Nathaniel Stapleton \\   
}
\date{\today}
\theoremstyle{definition} 
\DeclareMathOperator{\im}{im}
\DeclareMathOperator{\Fix}{Fix}
\DeclareMathOperator{\Twist}{Twist}
\DeclareMathOperator{\colim}{colim}
\DeclareMathOperator{\Iso}{Iso}
\DeclareMathOperator{\Spec}{Spec}
\DeclareMathOperator{\Spf}{Spf}
\newtheorem{thm}[subsection]{Theorem}
\newtheorem{remark}[subsection]{Remark}
\newtheorem{example}[subsection]{Example}
\newtheorem{prop}[subsection]{Proposition}
\newtheorem*{mainthm}{Theorem}
\newtheorem{definition}[subsection]{Definition}
\begin{document}
\maketitle
%useful commands: \xymatrix{} \cite{} \ref{} \label{}
%quick \xymatrix example
%\[
%\xymatrix{\Lambda_{k-1}^* \ar[r]^{\rho_{k}^*} \ar[d] & \Lk^* \ar[d] \\
%				\G[p^{k-1}] \ar[r]^{i_k} & \G[p^k]}
%\] 
\section{Introduction}
\begin{comment}
In chromatic homotopy theory, there is a history of trying to understand height $n$ phenomena in terms of height $n-1$ phenomena associated to the free loop space. At times the $S^1$-action on the free loop space plays a critical role and at times it is ignored. For instance, in topological cyclic homology and the redshift conjecture (such as in \cite{highertopologicalcyclichomology}) and in Witten's work on the elliptic genus (see \cite{Witten-index} and \cite{Witten-ellipticgenera}) the $S^1$-action plays a key role. However, the $S^1$-action is traditionally ignored in generalized character theory. In this work, we describe a generalized character theory in which this $S^1$-action is accounted for and we explain the relationship between it and the geometry of \pdiv groups.
\end{comment}
In chromatic homotopy theory, there is a history of trying to understand height $n$ phenomena in terms of height $n-1$ phenomena associated to the free loop space. There is an $S^1$-action on the free loop space by rotation. This action plays a key role in topological cyclic homology and the redshift conjecture (such as in \cite{highertopologicalcyclichomology}) and in Witten's work on the elliptic genus (see \cite{Witten-index} and \cite{Witten-ellipticgenera}). In generalized character theory the $S^1$-action has been traditionally ignored. In this work, we describe a generalized character theory in which this $S^1$-action is accounted for and we explain the relationship between it and the geometry of \pdiv groups.

When a space $X$ has an action by a group $G$, there are competing notions of the free loop space of $X$. In \cite{Witten-index}, Witten introduced the notion of a ``twisted loop space'' in which the loops and group action on the space $X$ have some interplay. Given $g \in G$ such that $g^h = e$, a loop twisted by $g$ is a map
\[
s_g: \R /h\Z \lra{} X,
\]
such that
\[
s_g(t+1) = gs_g(t)
\]
for $t \in \R /h\Z$.

This construction can be easily formalized using topological groupoids. The space of all twisted loops is modeled by the topological groupoid of maps
\[
\hom_{\text{top. groupoids}}(\R \mmod \Z,X \mmod G),
\]
where the notation $X \mmod G$ means the action topological groupoid. On restriction to the constant loops inside of this topological groupoid one recovers the inertia groupoid of the $G$-space $X$:
\[
\hom_{\text{top. groupoids}}(* \mmod \Z,X \mmod G).
\]
This is the action topological groupoid for the $G$-space 
\[
\Fix(X) = \Coprod{g \in G} X^g
\]
of Hopkins, Kuhn, and Ravenel. The target of the character maps of \cite{hkr} and \cite{tgcm} are the cohomology of the geometric realization of this topological groupoid (or the Borel construction):
\[
EG\times_G \Fix(X).
\]
In this case, the constant loops carry an action by $S^1$ that is non-trivial because of the interplay between the loops and the $G$-action. The purpose of this paper is to construct a character theory for Morava $E$-theory that keeps track of this $S^1$-action on the constant loops. We call the geometric realization of the resulting topological groupoid 
\[
\Twist(X).
\]
To be a bit more precise about what these spaces are, first note the equivalence
\[
EG\times_G \Fix(X) \simeq \Coprod{[g] \in G} EC(g)\times_{C(g)} X^g,
\]
where the coproduct is taken over conjugacy classes of elements in $G$ and $C(g)$ is the centralizer of $g$. However, a conjugacy class of elements in $G$ is the same thing as a conjugacy class of maps $\Z \lra{} G$. Fix a map 
\[
\al: \Z \lra{} G.
\]
Then $B\Z = S^1$ acts on $EC(g)\times_{C(g)} X^g$ by addition through $\al$. Finally, we can define
\[
\Twist(X) = \Coprod{[g] \in G} ES^1\times_{S^1} EC(g)\times_{C(g)} X^g.
\]
A generalized character theory built using these spaces is somewhat more refined than the generalized character theory of \cite{tgcm}. In fact, the transchromatic generalized character maps of \cite{tgcm} factor through these new maps. The cohomology of these spaces turns out to be closely tied to the geometry of \pdiv groups. To go into more detail, we must discuss the relationship between generalized character theory and \pdiv groups.

The generalized character theory of Hopkins, Kuhn, and Ravenel \cite{hkr} has proved to be a powerful tool in the analysis of the Morava $E$-theory of finite groups and finite $G$-spaces. A key idea of theirs is to construct a rational algebra $C_0$ over which the \pdiv group associated to Morava $\E$ trivializes to a constant scheme. Because $C_0$ is a rational algebra the generalized character map of Hopkins, Kuhn, and Ravenel can be viewed as transchromatic in nature: its source is in chromatic layer $n$ and its target is in chromatic layer $0$. In \cite{tgcm} the author generalized their construction to all of the heights between $0$ and $n$ by constructing an $\LE^0$-algebra $C_t$ with the property that there is a pullback square of \pdiv groups
\[
\xymatrix{\G_0 \oplus \QZ^{n-t} \ar[r] \ar[d] & \G \ar[r] \ar[d] & \G_{\E} \ar[d] \\
			\Spec(C_t) \ar[r] & \Spec(L_{K(t)}\E^0) \ar[r] & \Spec{\E^0}. }
\]
Over the ring $C_t$, $\G_{\E}$ splits into the sum of a formal group of height $t$ and a constant \'etale group of height $n-t$. It is vital to the topological constructions that the \'etale part of $C_t \otimes_{\E^0} \G_{\E}$ is constant. The main theorem of this paper is that a generalized character map can be defined using a ring $B_t$ over which the \pdiv group is a non-trivial extension of a height $t$ formal group by a height $n-t$ constant \'etale \pdiv group. More precisely, there is a pullback square
\[
\xymatrix{\G_0\oplus_{\Z_{p}^{n-t}} \Q_{p}^{n-t} \ar[r] \ar[d] & \G_{\E} \ar[d] \\ \Spf(B_t) \ar[r] & \Spec(\E^0).}
\]
We construct a complete version of $C_t$, called $\Chat$. There is a canonical map $B_t \lra{} \Chat$. Pulling back over this map recovers the split \pdiv group from the first diagram above.
Recall that the transchromatic generalized character map of \cite{tgcm} that starts in chromatic height $n$ and lands in chromatic height $t$ is a map of Borel equivariant cohomology theories
\[
\Phi_G:\E^*(EG\times_G X) \lra{} \Chat \otimes_{\LE^0}\LE^*(EG \times_G \Fix_{n-t}(X)).
\]

One of the key constructions in this paper is a generalization of the construction of the functor $EG \times_G\Fix_{h}(-)$ from \cite{tgcm}. Recall that, for a finite $G$-CW complex $X$,
\[
\Fix_{h}(X) = \Coprod{\al \in \hom(\Z_{p}^{h},G)}X^{\im \al}.
\]
This is a finite $G$-CW complex and the cohomology of $EG \times_G \Fix_{h}(X)$ is the codomain of the generalized character map. There is an equivalence
\[
EG\times_G \Fix_h(X) \simeq \Coprod{[\al] \in \hom(\Z_{p}^{h},G)/\sim} EC(\im \al) \times_{C(\im \al)} X^{\im \al},
\]
where $C(\im \al)$ is the centralizer of the image of $\al$ and the disjoint union is taken over conjugacy classes of maps. An analagous space is required to construct the twisted character maps. Let $\Z_{p}^{h} \lra{\al} G$ be a continuous map of groups in which $G$ is finite and discrete. A map of this form is determined by an $h$-tuple of pairwise commuting prime-power elements in $G$. It turns out that the pushout of abelian groups
\[
\xymatrix{\Z_{p}^{h} \ar[r] \ar[d]^{\al} & \Q_{p}^{h} \ar[d] \\ \im \al \ar[r] & \im \al \oplus_{\Z_{p}^{h}} \Q_{p}^{h}}
\]
can be extended on the left to $C(\im \al)$. We define
\[
T(\al) = C(\im \al)\oplus_{\Z_{p}^{h}} \Q_{p}^{h}.
\]
There is a short exact sequence
\[
0 \lra{} C(\im \al) \lra{} T(\al) \lra{} \QZ^{h} \lra{} 0.
\]
The action on the fixed point space $X^{\im \al}$ by $C(\im \al)$ can be extended to $T(\al)$ and we define 
\[
\Twist_h(X) = \Coprod{[\al] \in \hom(\Z_{p}^{h},G)/\sim} ET(\al)\times_{T(\al)} X^{\im \al}.
\]
Analogues of this construction have shown up in work of Ganter in \cite{stringypowerops} and in unpublished work of Rezk. As discussed in the first paragraphs, it is a Borel equivariant version of a construction in equivariant loop space theory in which the free loop space $LX$ is studied by understanding the $S^1$-action by rotation on the constant loops.
There is a canonical map
\[
\Twist_{h}(X) \lra{} \Twist_{h}(*) \cong (B\QZ)^h
\]
induced by the maps $T(\al) \lra{} (\QZ)^h$ and $X \lra{} *$.
We use this to construct a map of Borel equivariant cohomology theories called the twisted character map
\[
\Upsilon_G:\E^*(EG \times_G X) \lra{} B_{t}^{*}\otimes_{\LE^*(B\QZ^{n-t})} \LE^*(\Twist_{n-t}(X)).
\]
When $X = *$ and $G = \Zp{k}$ we show that this map recovers the global sections of the map on $p^k$-torsion
\[
\G_0\oplus_{\Z_{p}^{n-t}} \Q_{p}^{n-t}[p^k] \lra{} \G_{\E}[p^k].
\]
Because there is an isomorphism
\[
\G_0\oplus_{\Z_{p}^{n-t}} \Q_{p}^{n-t}[p^k] \lra{\cong} B_t \otimes_{\E^0} \G_{\E}[p^k],
\]
one might hope that this holds for more general spaces and groups. This is the main result.

\begin{mainthm}
For all finite groups $G$, the twisted character map induces an isomorphism of Borel equivariant cohomology theories
\[
B_t \otimes_{\E^0}\Upsilon_G:B_t\otimes_{\E^0}\E^*(EG \times_G X) \lra{\cong} B_{t}^{*}\otimes_{\LE^*(B\QZ^{n-t})} \LE^*(\Twist_{n-t}(X)).
\]
\end{mainthm}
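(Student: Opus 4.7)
The plan is to follow the template of the main theorem of \cite{tgcm}: verify that both sides of the twisted character map form Borel equivariant cohomology theories in $X$ that are natural in $G$, reduce along standard HKR lines to the case $X = *$ and $G = \Zp{k}$, and identify the map there with the isomorphism on $p^k$-torsion of \pdiv groups recorded in the introduction.

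The first task is to establish the cohomology-theory structure on both sides. The source $B_t \otimes_{\E^0} \E^*(EG \times_G -)$ inherits this from $\E^*$ provided $\E^0 \to B_t$ is suitably flat, which should follow from a construction of $B_t$ as a completed colimit analogous to $\Chat$ in \cite{tgcm}. For the target one must show that $\Twist_{n-t}(-)$ has the same formal properties as $\Fix_{n-t}(-)$: it decomposes over conjugacy classes of continuous maps $\al:\Z_p^{n-t}\to G$, each factor $ET(\al)\times_{T(\al)}X^{\im\al}$ is well behaved under $G$-equivariant cofiber sequences in $X$, and the extended $T(\al)$-action assembled from
\[
0 \to C(\im \al) \to T(\al) \to \QZ^{n-t} \to 0
\]
is natural. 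Together with flatness over $\LE^*(B\QZ^{n-t})$, this equips the target with its Borel equivariant cohomology theory structure.

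Next I would record the compatibility with \cite{tgcm} promised in the introduction: a further base change from $B_t$ to $\Chat$ turns the twisted character map into the transchromatic character map $\Phi_G$. This uses the pullback square presenting $\G_0 \oplus_{\Z_p^{n-t}} \Q_p^{n-t}$ as a cover of $\G_{\E}$ together with the observation that after splitting the \'etale part, $\Chat \otimes_{B_t} \LE^*(\Twist_{n-t}(X))$ identifies with $\Chat \otimes_{\LE^0} \LE^*(EG \times_G \Fix_{n-t}(X))$. With this compatibility in hand, the HKR-style reduction from \cite{tgcm} applies: both sides send disjoint unions to products and satisfy Mayer-Vietoris, so they are determined by their values on orbit spaces $EG \times_G G/H = BH$ for $H \subseteq G$, reducing the theorem to $X = *$ and $G$ a finite abelian group. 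A K\"unneth argument together with splitting off the prime-to-$p$ part further reduces to $G = \Zp{k}$, where by construction $B_t \otimes_{\E^0} \Upsilon_{\Zp{k}}$ identifies with the global sections of
\[
\G_0 \oplus_{\Z_{p}^{n-t}} \Q_{p}^{n-t}[p^k] \lra{\cong} B_t \otimes_{\E^0} \G_{\E}[p^k]
\]
recorded in the introduction.

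The main obstacle will be showing that the target is a Borel equivariant cohomology theory with the correct naturality in $G$. The coproduct defining $\Twist_{n-t}(X)$ interacts with subgroup restrictions through the centralizer $C(\im \al)$ and the extension $T(\al)$, so one must check that these restriction maps assemble into the Mayer-Vietoris property; one must also verify that the tensor product over $\LE^*(B\QZ^{n-t})$ is sufficiently exact to preserve cofiber sequences. Once these foundations and the compatibility with \cite{tgcm} are in place, reducing to $\Zp{k}$ and the final identification with $p^k$-torsion of \pdiv groups should proceed routinely.
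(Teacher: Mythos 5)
Your plan follows essentially the same route as the paper: establish that the codomain is a cohomology theory via flatness/K\"unneth over $\LE^*(B\QZ^{n-t})$, verify the formal properties of $\Twist_{n-t}(-)$ (compatibility with pushouts and realizations of finite $G$-CW complexes, and the identification $\Twist_{n-t}^{G}(G/H)\simeq\Twist_{n-t}^{H}(*)$ for abelian $H$) needed to run the HKR/\cite{tgcm}-style reduction to $X=*$ and $G=\Zp{k}$, and then identify $\Upsilon_{\Zp{k}}(*)$ with the global sections of the $p^k$-torsion map, which is an isomorphism over $B_t$ by construction. The only divergence is your intermediate appeal to the base change $B_t\lra{}\Chat$, which the paper treats in a separate section and does not use in the proof of the main theorem; it is harmless but not needed.
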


We also show how to canonically recover the transchromatic generalized character maps of \cite{tgcm} using the canonical map $B_t \lra{} \Chat$. It should be noted that, in personal correspondence, Lurie has described a method for building the transchromatic twisted character maps from the transchromatic generalized character maps of \cite{tgcm}.

\subsection*{Acknowledgements} Once again it is a pleasure to thank Charles Rezk for his assistance with this project. Rezk pointed out a version of the $\Twist(-)$ construction and suggested that it might be related to non-trivial extensions of \pdiv groups. I would like to thank Jacob Lurie for several illuminating discussions. I'd also like to thank Matt Ando, Mark Behrens, David Carchedi, David Gepner, Tyler Lawson, Haynes Miller, Olga Stroilova, and the referee for their time and helpful remarks. 

\begin{comment}
 that however it turns out that an analagous construction can be made over a ring $B_t$ the \pdiv group is split. In this paper we construct character maps over a ring $B_t$ that has the property that
\[
g
\]
\end{comment}

\section{Notation and Conventions}
For a scheme $X$ over $\Spec R$ and a map $R \lra{} S$ we will set
\[
S \otimes X = \Spec S \times_{\Spec R} X.
\]
At times we will be working with formal schemes and we will mean the pullback in the appropriate category of formal schemes.

Given a ring $R$ complete with respect to an ideal $I$, let
\[
\Spf_I R = \Colim{k} \big(\Spec(R/I) \lra{} \Spec(R/I^2) \lra{} \ldots \big).
\]

By $\QZ^{h}$, $\Z_{p}^{h}$, and $\Q_{p}^{h}$ we will always mean $(\QZ)^{h}$, $(\Z_{p})^{h}$, and $(\Q_{p})^{h}$. We permanantly fix basis elements $b_1,\ldots,b_h$ of $\Z_{p}^{h}$ where
\[
b_i = (0, \ldots, 0 ,1, 0,\ldots 0)
\]
where the $1$ is in the $i$th place.

We will often need to refer to an indexed collection of elements such as $\{q_1,\ldots,q_h\}$. We will often refer to this collection using a bar:
\[
\bq = \{q_1,\ldots,q_h\}.
\]
So for example
\[
\Z_p\powser{\bq} = \Z_p\powser{q_1,\ldots,q_{h}}.
\]
%Do I use this somewhere?
\begin{comment}
Given $i \in (\Zp{k})^h$ we use the notation
\[
i\cdot \bq = i_1 q_1 + \ldots + i_{h} q_h.
\]
\end{comment}
Given maps of abelian groups $A \lra{} B$ and $A \lra{} C$ we let $B \oplus_A C$ be the pushout of $B$ and $C$ along $A$.

\section{Transchromatic Geometry}
\subsection{Non-Trivial Extensions} \label{Bt}
We begin by constructing a universal $\LE^0$-algebra $B_t$ with the property that there is a pullback square
\[
\xymatrix{\G_0\oplus_{\Z_{p}^{n-t}} \Q_{p}^{n-t} \ar[r] \ar[d] & \G_{\E} \ar[d] \\ \Spf_{I_t + (\bq)}(B_t) \ar[r] & \Spec(\E^0).}
\]

Fix a prime $p$ and let $0 \leq t < n$. Let $\E$ be a Morava $E$-theory and $\LE$ be the localization of $\E$ with respect to Morava $K$-theory of height $t$. We have the following description of the coefficients of $\E$ and $\LE$:
\begin{align*}
\E^0 &\cong W(k)\powser{u_1, \ldots , u_{n-1}} \\
L_t = \LE^0 &\cong W(k)\powser{u_1, \ldots , u_{n-1}}[u_t^{-1}]^{\wedge}_{(p, \ldots ,u_{t-1})}. 
\end{align*}
Let $I_t = (p,u_1,\ldots,u_{t-1})$. Note that both of the rings above are complete with respect to $I_t$. Let $\G_{\E}$ be the formal group associated to $\E$ viewed as a \pdiv group. Once and for all we fix a coordinate $x$ for $\G_{\E}$. In \cite{tgcm} it is shown that $\G = L_t \otimes_{\E^0} \G_{\E}$ is the middle term of a connected-\'etale sequence of \pdiv groups
\[
0 \lra{} \G_0 \lra{} \G \lra{} \G_{et} \lra{} 0,
\]
where $\G_0$ is the formal group associated to $\LE$ from the complex orientation induced by the canonical localization map $\E \lra{} \LE$. Note that $\E$ and $\LE$ are $p$-complete for $n > 0$ and $t > 0$ so that we have the following isomorphisms
\begin{align*}
\E^*(B\QZ^h) \cong \E^*(B(S^1)^h) \\
\LE^*(B\QZ^h) \cong \LE^*(B(S^1)^h).
\end{align*}
On the other hand, for $n = 0$ or $t = 0$, $\E^*(B\QZ^h) \cong \E^*$ and $(\LE)^*(B\QZ^h) \cong \LE^*$.

We begin by constructing an $L_t$-algebra $\Bprime$ with the property that over $\Bprime$ there is a canonical diagram of the form
\[
\xymatrix{\Z_p^{n-t} \ar[r] \ar[d] & \Q_p^{n-t} \ar[d] \\
			\G_0 \ar[r] & \G.}
\]

Define
\begin{align*}
\bq &= \{q_1,\ldots,q_{n-t}\}, \\ \bA &= \{A_1,\ldots,A_{n-t}\}, \\  ([p^k]\bA - \bq) &= ([p^k]A_1 - q_1, \ldots, [p^k]A_{n-t} - q_{n-t}).
\end{align*}

Consider the ring 
\[
\Bprime =
(\Colim{k} \text{ } L_t\powser{\bq} \otimes_{\E^0\powser{\bq}} \E^0\powser{\bq,\bA}/([p^k]\bA - \bq))_{I_t+(\bq)}^{\wedge}.
\]
The colimit is completed with respect to the ideal $I_t+(\bq)$.

This construction was inspired by the work of Ando and Morava in \cite{AndoMorava} Section 5. The idea behind the construction is that the ring $\Bprime$ has $n-t$ canonical points for the formal group $\G_0 = \G_{\LE}$ given by the set $\bq$. The elements of $\bA$ are invisible to $\G_0$ in the sense that there are no maps $L_t\powser{x} \lra{} \Bprime$ such that $x \mapsto A_i$ for any $i$. We formalize all of this in the next proposition using the language of formal algebraic geometry.

\begin{prop}
The ring $\Bprime$ is the universal complete $L_t$-algebra equipped with the following diagram of sheaves of $\Z_p$-modules in the fppf topology on complete $L_t$-algebras:
\[
\xymatrix{\Z_{p}^{n-t} \ar[r] \ar[d] & \Q_{p}^{n-t} \ar[d] \\
			\Bprime \otimes \G_0 \ar[r] & \Bprime\otimes \G.}
\]
\end{prop}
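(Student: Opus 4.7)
The plan is to verify the universal property by unpacking the data of the diagram into algebraic data on a complete $L_t$-algebra $R$ and matching it term-by-term with the defining presentation of $\Bprime$. Throughout, continuous/complete structures play an important role so I would work consistently with $R$ that is $I_t+(\bar{r})$-adically complete (for appropriate tuples $\bar{r}$), which is automatic once we assume $R$ is a complete $L_t$-algebra and the image $\bq$ consists of topologically nilpotent elements.

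First, I would describe what such a diagram of $\Z_p$-module sheaves provides. A morphism of constant sheaves $\Z_p^{n-t} \to R\otimes \G_0$ is determined by the images of the basis vectors $b_i$, giving an $(n-t)$-tuple $\bq = (q_1, \ldots, q_{n-t})$ of elements of $\G_0(R)$; since $\G_0 = \Spf L_t\powser{x}$, these are topologically nilpotent elements of $R$, and hence correspond to a continuous $L_t$-algebra map $L_t\powser{\bq} \to R$. Similarly, a morphism $\Q_p^{n-t} \to R\otimes\G$ of $\Z_p$-module sheaves, because $\Q_p^{n-t} \cong \colim_k (p^{-k}\Z_p)^{n-t}$, corresponds to a compatible system $(\bA_k)_{k\geq 0}$ of tuples in $\G(R)$ with $p\cdot \bA_{k+1} = \bA_k$. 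The commutativity of the square forces $\bA_0$ to equal the image of $\bq$ under $\G_0 \hookrightarrow \G$; hence the system satisfies $[p^k]\bA_k = \bq$ for every $k$, where $[p^k]$ is computed with respect to the coordinate $x$ on $\G_\E$ we fixed.

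Next, I would identify each level of this system with a ring map. Having $\bA_k \in \G(R)$ with $[p^k]\bA_k = \bq$ over the chosen $\bq$ is exactly the data of an $\E^0$-algebra map
\[
R_k := \E^0\powser{\bq,\bA}/([p^k]\bA - \bq) \lra{} R
\]
that restricts to the map $L_t\powser{\bq} \to R$ on the subring generated by $\bq$ (using the pushout structure). The transition maps $R_k \to R_{k+1}$ inherent in the colimit defining $\Bprime$ are determined by $\bA_k \mapsto [p]\bA_{k+1}$, which is precisely the relation encoding $p\cdot \bA_{k+1} = \bA_k$. A compatible system of maps $R_k \to R$ over $L_t\powser{\bq}$ is therefore a map from the colimit, and because $R$ is $I_t + (\bq)$-adically complete the map factors uniquely through the completion. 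Reversing this chain of correspondences produces, from any map $\Bprime \to R$, the required diagram; naturality in $R$ is routine.

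The main obstacle is the step identifying morphisms of sheaves $\Q_p^{n-t} \to R\otimes \G$ with the concrete algebraic data $\bA_k$ satisfying $[p^k]\bA_k = \bq$ together with the telescope compatibility $p\bA_{k+1} = \bA_k$. One must justify that $\Q_p^{n-t}$, as a sheaf of $\Z_p$-modules in the fppf topology, is correctly described as the filtered colimit of $(p^{-k}\Z_p)^{n-t}$ (which is fine because $\Q_p$ is discrete as an abstract group), and verify that a \emph{sheaf} morphism from such a colimit is determined by its values on the generators $p^{-k} b_i$ with the displayed compatibility, i.e.\ that no extra fppf-local data sneaks in. Once this translation is made, the remaining content is the standard fact that the constructions $L_t\powser{\bq}$, the relation algebras $R_k$, the colimit, and the completion at $I_t+(\bq)$ precisely corepresent the three pieces of data (the tuple $\bq$, the compatible system $\bA_k$, and the convergence required by sheaf-of-$p$-divisible-group points), yielding the asserted universality.
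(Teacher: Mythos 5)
Your proposal is correct and takes essentially the same route as the paper: the paper's universality step likewise unpacks the square over a (connected) complete $L_t$-algebra $R$ into a tuple of points of $\G_0(R)$ — i.e.\ a continuous map $L_t\powser{\bq} \lra{} R$ — together with a compatible system of $p^k$-fold divisions of $\bq$ in $\G(R)$, which is exactly a compatible family of maps out of the terms $L_t\powser{\bq}\otimes_{\E^0\powser{\bq}}\E^0\powser{\bq,\bA}/([p^k]\bA-\bq)$ and hence out of the completed colimit $\Bprime$. The one point you leave implicit (and honestly flag) is supplied in the paper by Tate's Lemma 0, which identifies $\G_0(R)$ with the largest ideal of definition and computes $\G(R)$ through the quotients $R/(I_t+\bq)^k$, so that a point $\bA$ with $[p^k]\bA=\bq$ is genuinely the same datum as ring elements satisfying that equation; the paper's further step that the map does not extend to $\Q_{p}^{n-t}\lra{}\G_0$ is not needed for the universal property itself.
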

\begin{proof}
We will prove this in four steps. First we will construct the map $\Z_{p}^{n-t} \lra{} \Bprime \otimes \G_0$. Then we will show that it does not extend to a map from $\Q_{p}^{n-t} \lra{} \Bprime \otimes \G_0$. Then we will show that it does extend to a map $\Q_{p}^{n-t} \lra{} \Bprime \otimes \G$. We will use Tate's Lemma $0$ from \cite{Tate-p-div}. Let $R$ be a ring complete with respect to an ideal $I$ that contains $p$ and let $[p^k](x)$ be the $p^k$-series for some formal group law. Tate's Lemma 0 states that the topologies on $R\powser{x}$ generated by the powers of $I+(x)$ and the ideals $I^k+([p^k](x))$ agree.
\\ \\
1. We construct the map $\Z_{p}^{n-t} \lra{} \Bprime \otimes \G_{0}$ as a map 
\[
\Z_{p}^{n-t} \lra{} \G_0(\Bprime).
\]
We have the isomorphism
\begin{align*}
\G_0(\Bprime) &\cong \Lim{k}\text{ }\Colim{j}\hom_{\text{cont } L_t}(L_t\powser{x}/[p^j](x),\Bprime/(I_t+\bq)^k). \\
\end{align*}
As $\G_0$ is formal and by Tate's Lemma $0$ this is 
\[
\hom(\Spf_{I_t+(\bq)}(\Bprime),\Spf_{I_t+(x)}(L_t\powser{x})) \cong \hom_{\text{cont } L_t}(L_t\powser{x},\Bprime),
\]
which is the largest ideal of definition for $\Bprime$ with the topology induced by $I_t + (\bq)$. Call this ideal $J$. The elements of $\bq$ are elements of $J$ and can be used to define the required map.
\\ \\
2. We observe that the map defined in part 1 does not extend to $\Q_{p}^{n-t}$. This is not immediately clear because we don't know that the elements of $\bA$ are not in $J$. Assume a continuous map 
\[
L_t\powser{x} \lra{x \mapsto A_i} \Bprime
\]
exists. Consider the composite
\[
L_t\powser{x} \lra{} \Bprime \lra{} \Bprime/(\bq).
\]
It factors through
\[
L_t\powser{x} \lra{x \mapsto A_i} L_t\otimes_{\E^0} \E^0\powser{A_i}/[p^{k}](A_i)
\]
for some $k$. Now, since the map is assumed to be continuous, the power series $[p^k](x)$ must be in the kernel of the map. Thus we get a map 
\[
L_t\powser{x}/[p^k](x) \lra{x \mapsto A_i} L_t\otimes_{\E^0} \E^0\powser{A_i}/[p^{k}](A_i).
\]
\begin{comment}
If the map defined in part 1 did extend to $\Q_{p}^{n-t}$ then we 
 By composing this with the obvious map
\[
L_t\otimes_{\E^0} \E^0\powser{A_i}/[p^{k'}](A_i)
\]
\end{comment}
But this map is an inverse to the canonical quotient in the other direction and cannot exist for dimension reasons (the ring on the right has higher rank as a free $L_t$-module). \\ \\
3. The composite $\Z_{p}^{n-t} \lra{} \Bprime \otimes \G$ does extend to $\Q_{p}^{n-t}$. Tate's Lemma $0$ gives us that 
\[
\Lim{k} \text{ } \Bprime/(I_t+\bq)^k \cong \Lim{k} \text{ } \Bprime/(I_{t}^{k}+[p^k](\bq))
\]
and so
\begin{align*}
\G(\Bprime) &\cong \Lim{k}\text{ }\Colim{j} \hom_{\text{cont } L_t}(L_t \otimes_{\E^0} \E^0\powser{x}/[p^j](x),\Bprime/(I_t+\bq)^k)  \\ 
&\cong \Lim{k}\text{ }\Colim{j}  \hom_{\text{cont } L_t}(L_t \otimes_{\E^0} \E^0\powser{x}/[p^j](x),\Bprime/(I_{t}^{k}+[p^k](\bq)))
\end{align*}
does detect the elements of $\bA$. As $k$ varies we get 
\[
\Q_{p}^{n-t} = \lim \big( \ldots \lra{} (\Q_p/p^2\Z_p)^{n-t} \lra{} (\Q_p/p\Z_p)^{n-t} \lra{} (\QZ)^{n-t} \big).
\]
\\
4. By universality in the statement of the claim we mean that $\Spf_{I_t + (\bq)}(\Bprime)$ represents the functor that brings a complete $L_t$-algebra $R$ to the set of commutative squares of the form
\[
\xymatrix{\Z_{p}^{n-t} \ar[r] \ar[d] & \Q_{p}^{n-t} \ar[d] \\
			R \otimes \G_0 \ar[r] & R \otimes \G.}
\]
From the above we see that a continuous $L_t$-algebra map $B_t' \lra{} R$ induces a square of this sort. Also, each square of this sort comes from such a map. Let $R$ be a connected complete $L_t$-algebra. Given such a square, the $R$ points of the square induce a map
\[
(\Z_p)^{n-t} \lra{} \G_0(R),
\]
which is precisely a map $L_t\powser{\bq} \lra{} R$. Now the other side of the square implies that this map extends to a map 
\[
B_t' \lra{} R.
\]
\end{proof}

\begin{comment}
\begin{prop}
There are no maps $L_t\powser{x} \lra{} \Bprime$ that map $x \mapsto A_i$.
\end{prop}
\begin{proof}
Assume such a map exists:
\[
L_t\powser{x} \lra{x \mapsto A_i} \Bprime.
\]
Now compose this with the quotient of $\Bprime$ by the ideal generated by $(\bq)$. This provides a map 
\[
L_t\powser{x} \lra{} L_t \otimes \E\powser{\bA}/([p^k](\bA)) 
\]
that factors through
\[
L_t\powser{x}/[p^k](x) \lra{} L_t \otimes \E\powser{A_i}/([p^k](A_i)).
\]
Such a map is an inverse to the canonical map 
\[
L_t \otimes \E\powser{A_i}/([p^k](A_i)) \lra{} L_t\powser{x}/[p^k](x)
\]
and this is impossible for rank reasons.
\end{proof}

The upshot of the previous proposition is that the map $\Z_{p}^{n-t} \lra{} \G_{for}$ does not extend to a map $\Q_{p}^{n-t} \lra{} \G_{for}$. Note that $\Bprime$ is complete with respect to the ideal $I_t + (\bq)$.

\begin{prop}
The ring $\Bprime$ is the universal $\E$-algebra with the property that there is a diagram of the form above.
\end{prop}
\begin{proof}
We prove this by showing that $\Bprime$ represents the functor 
\[
\text{complete} L_t-algs \lra{} \text{Set}
\]
that map

\end{proof}
\end{comment}

The ring $\Bprime$ can be realized topologically. We give the case $t = n-1$. View $\E^0(B\QZ)$ as an $\E^0(B\QZ)$-algebra through the map $\QZ \lra{\times p^k} \QZ$. Using the coordinate this map is described by 
\[
\E^0\powser{x} \lra{x \mapsto [p^k](x)} \E^0\powser{x}.
\] 
Set $q = [p^k](x)$. Then the codomain can be described as
\[
\E^0\powser{x} \cong \E^0\powser{q,A_1}/([p^k]A_1 - q)
\]
and the map is the inclusion
\[
\E^0\powser{q} \lra{} \E^0\powser{q, A_1}/([p^k]A_1 - q).
\] 
The Weierstrass preparation theorem implies that the codomain is a free module of rank $p^{kn}$ over the domain. To topologically define the maps that the colimit is taken over consider the square
\[
\xymatrix{\QZ \ar[r]^{\times p^{k+1}} \ar[d]^{\times p} & \QZ \ar[d]^{=} \\ \QZ \ar[r]^{\times p^k} & \QZ.}
\]
This induces
\begin{align*}
\LE^0(B\QZ) \otimes^{p^k}_{\E^0(B\QZ)} \E^0(B\QZ) & \\ 
\lra{1\otimes p} \LE^0(B\QZ) & \otimes^{p^{k+1}}_{\E^0(B\QZ)} \E^0(B\QZ).
\end{align*} 

Now we see that, up to a matter of completion, we have the following isomorphism
\[
\Bprime \cong \Colim{k} \text{ } \LE^0(B(\QZ^{n-t})) \otimes_{\E^0(B(\QZ^{n-t}))}^{p^k} \E^0(B(\QZ^{n-t})),
\]
where the right side of the tensor product is induced by the map
\[
\QZ^{n-t} \lra{\times p^k} \QZ^{n-t}.
\]
As $L_{K(0)}\E$ is a rational cohomology theory, this implies that $B_{0}^{'} \cong C_{0}^{'}$, where $C_{0}^{'}$ is the ring defined just before Proposition 2.13 of \cite{tgcm}.

Over $\Bprime$ there is a canonical map of \pdiv groups 
\[
\G_0\oplus_{(\Z_p)^{n-t}}(\Q_p)^{n-t} \lra{} \G.
\]
We can give a description of the global sections of this map on $p^k$-torsion. We begin with an informal description of the global sections and then describe the map. Let $R$ be a complete $\Bprime$-algebra. Let $q_j \in R$ be the image of $q_j \in \Bprime$. Then
\[
\G_0\oplus_{(\Z_p)^{n-t}}(\Q_p)^{n-t}(R) = \G_0(R) \oplus_{(\Z_p)^{n-t}}(\Q_p)^{n-t}
\]
has elements of the form 
\[
(r,\frac{i_1}{p^{k_1}},\ldots,\frac{i_{n-t}}{p^{k_{n-t}}}),
\]
where $r$ is an element of $\G_0(R)$ and the quotients are elements of $\Q_p$ greater than or equal to $0$ and less than $1$. Addition of two such tuples is computed by formal addition in the first variable and adding in $\Q_p$ in the other variables. However, if in the sum $\frac{i_j}{p^{k_j}} \geq 1$ for some $1 \leq j \leq n-t$ then we subtract $1$ from it and formally subtract $q_j$ from the first term. A more thorough discussion of the arithmetic can be found in \cite{AndoMorava} Subsection 5.1.

Thus, for example, an element of the form
\[
(r,\frac{1}{p},0,\ldots,0)
\]
is $p$-torsion if and only if $[p](r) = q_1$, for then
\[
p\cdot(r,\frac{1}{p},0,\ldots,0) = ([p](r),1,0,\ldots,0) = ([p](r) -_{\G_0} q_1,0,0,\ldots,0) = (0,0,0,\ldots,0).
\]

For $i = (i_1,\ldots,i_{n-t}) \in \Lk$, let 
\[
[i](\bq) = [i_1](q_1) +_{\G_0} \ldots +_{\G_0} [i_{n-t}](q_{n-t}).
\]
From the above discussion we deduce that
\[
\Sect_{\G_0\oplus_{\Z_{p}^{n-t}}\Q_{p}^{n-t}[p^k]} \cong \Prod{i \in \Lk}\Bprime \powser{x_i}/([p^k](x_i) - [i](\bq)).
\]
Recall that the ideals $([p^k](x_i) - [i](\bq))$ and $([p^k](x_i) -_{\G_0} [i](\bq))$ are equal.

Now we can describe the global sections of the map
\[
\G_0\oplus_{\Z_{p}^{n-t}}\Q_{p}^{n-t}[p^k] \lra{} \G_{\E}[p^k].
\]
This is a map
\[
\E^0\powser{x}/[p^k](x) \lra{} \Prod{i \in \Lk}\Bprime \powser{x_i}/([p^k](x_i) -_{\G_0} [i](\bq)).
\]
Recall from Lemma 2.16 in \cite{tgcm} that the ideals $([p^k](x_i) -_{\G_0} [i](\bq))$ and $([p^k](x_i) - [i](\bq))$ are equal. We only need to explain where $x$ maps to for each $i \in \Lk$. The map above is given by 
\[
x \mapsto (x_i -_{\G_{\E}} [i](\bA))_{i \in \Lk},
\]
where 
\[
[i](\bA) = [i_1]A_1 +_{\G_{\E}} \ldots +_{\G_{\E}} [i_{n-t}](A_{n-t})
\]
for $A_i \in \Bprime$ with the property that $[p^k]A_i = q_i$.

Recall that $A_i \in \Bprime$ is an element of $\G(\Bprime)$ but not an element of $\G_0(\Bprime)$. Now we can check that the map is well defined:
\begin{align*}
[p^k](x_i -_{\G_{\E}} [i](\bA))_{i \in \Lk} &= ([p^k]x_i -_{\G_{\E}} [p^k][i](\bA))_{i \in \Lk} \\
&= ([p^k]x_i -_{\G_{\E}} [i][p^k](\bA))_{i \in \Lk} \\
&= ([i](\bq) -_{\G_{\E}} [i](\bq))_{i \in \Lk} \\
&= 0.
\end{align*} 

We leave it as an exercise to the reader to verify that this is a map of Hopf algebras.

As in \cite{tgcm} the induced map $\QZ^{n-t} \lra{} \G_{et}$ over $\Bprime$ defines a subset $R \subset \Bprime$ such that over $B_t = (R^{-1}\Bprime)^{\wedge}_{I_t + (\bq)}$ there is a canonical isomorphism
\[
\G_0\oplus_{\Z_{p}^{n-t}}\Q_{p}^{n-t} \lra{\cong} \G.
\]

%Should this be L_t\powser{q_1,..,q_{n-t}}-algebras????
\begin{prop}
The complete $L_t$-algebra $B_t$ represents the functor
\[
\Iso_{\G_0/}(\G_0\oplus_{\Z_{p}^{n-t}}\Q_{p}^{n-t},\G):\text{Complete } L_t-\text{Alg} \lra{} \text{Set} 
\]
that sends
\[
R \mapsto \Iso_{\G_0/}(R\otimes_{L_t}\G_0\oplus_{\Z_{p}^{n-t}}\Q_{p}^{n-t},R\otimes \G).
\]
\end{prop}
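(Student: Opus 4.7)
The plan is to deduce this from the representability of $\Bprime$ established in the previous proposition, together with the localization argument used to pass from $\Bprime$ to $B_t$.

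First I would reinterpret the previous proposition in the language of morphisms of \pdiv groups rather than commutative squares. By the universal property of the pushout of sheaves of $\Z_p$-modules, a square
\[
\xymatrix{\Z_{p}^{n-t} \ar[r] \ar[d] & \Q_{p}^{n-t} \ar[d] \\
R \otimes \G_0 \ar[r] & R \otimes \G}
\]
is precisely the datum of a map $R \otimes (\G_0 \oplus_{\Z_{p}^{n-t}} \Q_{p}^{n-t}) \lra{} R \otimes \G$ of sheaves of $\Z_p$-modules under $\G_0$. Since both sides are extensions of an étale \pdiv group by the common formal part $\G_0$ and the map is specified on the formal and étale pieces, this datum is the same as a morphism of \pdiv groups under $\G_0$. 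So $\Bprime$ represents the functor $\Mor_{\G_0/}(\G_0 \oplus_{\Z_{p}^{n-t}} \Q_{p}^{n-t},\G)$.

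Second I would observe that such a morphism $f$ is an isomorphism if and only if the induced map on étale quotients $\QZ^{n-t} \lra{} \G_{et}$ is an isomorphism; this follows from the five lemma applied to the connected--étale sequences of source and target, using that $f$ restricts to the identity on $\G_0$. Representing the isomorphism subfunctor therefore reduces to cutting out the locus in $\Spf_{I_t + (\bq)}(\Bprime)$ where the universal map on étale quotients is an isomorphism. This is exactly what the localization $B_t = (R^{-1}\Bprime)_{I_t + (\bq)}^{\wedge}$ was designed to do: using the explicit description $x \mapsto (x_i -_{\G_{\E}} [i](\bA))_{i \in \Lk}$ of the universal map on $p^k$-torsion derived above, one sees level by level that the induced map on $p^k$-torsion of étale quotients is a map of finite free modules of the same rank $p^{k(n-t)}$ over $\Bprime$, whose isomorphism is detected by the invertibility of a single determinant-type element. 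The subset $R \subset \Bprime$ consists precisely of these elements as $k$ varies. To close the argument, given any complete $L_t$-algebra $R'$ and an isomorphism $R' \otimes (\G_0 \oplus_{\Z_{p}^{n-t}} \Q_{p}^{n-t}) \lra{\cong} R' \otimes \G$ under $\G_0$, the previous proposition provides a unique continuous $L_t$-algebra map $\Bprime \lra{} R'$, and the isomorphism hypothesis forces the elements of $R$ to land in $(R')^{\times}$, so this map factors uniquely through $B_t$.

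The main obstacle will be the identification in the middle step: checking explicitly that inverting the elements of $R$ in the completed sense really does cut out the isomorphism locus, rather than a strictly smaller or larger subfunctor. This is the heart of the construction and parallels the analogous localization used to build $C_t$ in \cite{tgcm}; I expect the techniques there to transfer with only the bookkeeping modified to reflect that the étale quotient here is a non-split extension input $\QZ^{n-t}$ rather than a trivialized one.
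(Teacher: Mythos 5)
Your argument is correct and is essentially the paper's own proof: both rest on the representability of $\Bprime$ from the previous proposition, the five-lemma reduction of ``the map out of the pushout is an isomorphism'' to ``the induced map $\QZ^{n-t} \lra{} \G_{et}$ is an isomorphism,'' and the fact that $B_t$ is by construction the completed localization of $\Bprime$ at the set $R$ that inverts exactly this \'etale condition. The only cosmetic discrepancy is your description of $R$ as determinant-type elements (it is the set of coordinate values $\phi_y(i)$ as in \cite{tgcm}); otherwise you simply spell out the converse direction (an isomorphism over $R'$ forces the elements of $R$ to become units, giving the unique factorization through $B_t$), which the paper's terser proof leaves implicit.
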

\begin{proof}
A map
\[
B_t \lra{} R
\]
induces via precomposition a map
\[
\Bprime \lra{} R,
\]
which represents a square
\[
\xymatrix{\Z_{p}^{n-t} \ar[r] \ar[d] & \Q_{p}^{n-t} \ar[d] \\
			\G_0 \ar[r] & \G}
\]
over $R$. This induces a map 
\[
\QZ^{n-t} \lra{} \G_{et},
\]
which is an isomorphism by the definition of $B_t$. Now the five lemma implies that the map from the pushout of the square to $\G$ is an isomorphism.
\end{proof}

\subsection{Relation to $C_t$} 
\label{relationtoC_t}
Let $C_t$ be the $L_t$-algebra defined in Section 2.9 of \cite{tgcm}. Recall that $C_t = S^{-1}C_{t}'$. Let $\Chat ' = (C_{t}')^{\wedge}_{I_t}$ be the completion of $C_{t}'$ at the ideal $I_t$. Now we define $\Chat$ to be $(S^{-1}(C_{t}')^{\wedge}_{I_t})^{\wedge}_{I_t}$. It should not be surprising that this is the correct analogue of $C_t$ in the setting of this paper. The recompletion process just forces the ring to be a complete $L_t$-algebra.

There is a canonical topologically-induced map from $B_t$ to $\Chat$. Let $\Lk = (\Zp{k})^{n-t}$. Consider the commutative square
\[
\xymatrix{\Lk \ar[r] \ar[d] & \QZ^{n-t} \ar[d]^{\times p^k} \\ e \ar[r] & \QZ^{n-t}.}
\]
The map
\[
\LE^0(B(\QZ^{n-t})) \otimes^{p^k}_{\E^0(B(\QZ^{n-t}))} \E^0(B(\QZ^{n-t})) \lra{} L_t\otimes_{\E^0} \E^0(B(\Lk))
\]
is induced in the left and bottom factors by the bottom map of the square and in the right factor by the top map of the square. This map fits together with the colimit on both sides to provide a map $B_{t}^{'} \lra{} \Chat '$.

\begin{prop}
The map above sends the set $R \subset \Bprime$ bijectively to the set $S \subset \Chat '$.
\end{prop}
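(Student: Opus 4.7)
My plan is to identify $R$ and $S$ explicitly, indexed by the same combinatorial data, and then to trace the map $\Bprime \to \Chat '$ on these generators.

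First I would unwind the defining condition for $R$. The map $\QZ^{n-t} \to \G_{et}$ over $\Bprime$ is, at level $k$, a homomorphism of finite \'etale group schemes $\Lk \to \G_{et}[p^k]$ of the same order $p^{k(n-t)}$. It is an isomorphism iff the sections $[i](\bA)$ of $\G_{et}[p^k]$ are nowhere zero for every $0 \ne i \in \Lk$. Following the recipe in \cite{tgcm}, a chosen coordinate on $\G$ extracts from each such section an explicit element of $\Bprime$ whose invertibility detects this non-vanishing, and these elements, indexed by pairs $(k,i)$ with $k \geq 1$ and $0 \ne i \in \Lk$, generate $R$ as a multiplicative system. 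The parallel description applied to $\Chat '$, with the $n-t$ natural $p^k$-torsion sections coming from $\E^0(B\Lk)$ in place of the $A_j$, produces the generators of $S$.

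Next I would apply the explicit formula for the map. From the commutative square used to define it, at level $k$ the map $\Bprime \to \Chat '$ sends $A_j$ to the natural $p^k$-torsion coordinate $x_j \in \E^0(B\Lk) \subset \Chat '$ and sends $q_j \mapsto 0$. Consequently the formal group expression $[i](\bA)$ maps to $[i](x_1,\ldots,x_{n-t})$, so the generator of $R$ indexed by $(k,i)$ maps precisely to the generator of $S$ indexed by $(k,i)$. Since the indexing sets agree, the induced map on generators is visibly a bijection on indices.

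The main obstacle is that $\Bprime \to \Chat '$ has a large kernel (it kills the ideal $(\bq)$), so a priori distinct generators of $R$ could collapse in $\Chat '$. To handle this I would work at level $k$ inside the subring of $\Bprime$ topologically generated over $\E^0$ by the $A_j$'s, namely $\E^0\powser{\bA}/([p^k]\bA - \bq)$, which maps under $A_j \mapsto x_j$ and $q_j \mapsto 0$ into $\E^0\powser{x_1,\ldots,x_{n-t}}/([p^k]x_j) \subset \Chat '$. The generators of $R$ are formal-group-law expressions in the $A_j$'s alone, and the analogous generators of $S$ are the same expressions in the $x_j$'s and are pairwise distinct; this forces the map on generators to be injective. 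Bijectivity then follows by passing to the colimit in $k$.
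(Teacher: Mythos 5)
Your identification of the sets $R$ and $S$ is not the one the proposition is about, and the injectivity step rests on it. Following \cite{tgcm}, $S_k$ (and analogously $R_k$) is the image of $\Lk$ under $\phi_y$, where $y$ is the coordinate on the \'etale quotient coming from $\Sect_{\G_{et}[p^k]} \cong \Chat '[y]/(j_k(y))$: the element attached to $i \in \Lk$ is the $y$-coordinate of the image in $\G_{et}$ of the torsion point, not the $x$-coordinate $[i](\bA)$ itself. Extracting that $y$-coordinate uses the connected--\'etale splitting, i.e.\ the Weierstrass factorization of $[p^k](x)$ over $L_t$, so the elements of $R$ are not ``formal-group-law expressions in the $A_j$'s alone'' and there is no reason for them to lie in the subring $\E^0\powser{\bA}/([p^k]\bA - \bq)$. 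Consequently the reduction to that $\E^0$-subring, which is what carries your injectivity claim, does not apply to the actual $R$ and $S$; and even for your substituted elements the pairwise distinctness in $\Chat '$ (a colimit followed by recompletion, where a priori elements could be identified) is asserted rather than proved --- that distinctness is precisely the injectivity of $\phi_y$ established in \cite{tgcm}. Your implicit claim that invertibility of the $x$-coordinate detects non-vanishing of the \'etale image would also need an argument, but in any case it changes the sets, whereas the proposition concerns $R$ and $S$ as defined.

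Your step matching the generator indexed by $(k,i)$ on one side to the one indexed by $(k,i)$ on the other is the right idea and is essentially the surjectivity half of the paper's argument (there it comes from pulling the canonical map $\QZ^{n-t} \lra{} B_t' \otimes \G_{et}$ back along $B_t' \lra{} \Chat '$ and comparing global sections of the $p^k$-torsion, giving a surjection $R_k \lra{} S_k$). The missing half should be supplied not by the $\E^0$-subring computation but either by citing the injectivity of $\phi_y$ over $\Chat '$ from \cite{tgcm} directly, or, as the paper does, by counting: injectivity of $\phi_y$ gives $|S_k| = |\Lk| \geq |R_k|$, so the surjection of finite sets $R_k \lra{} S_k$ is a bijection, and one then passes to the colimit over $k$.
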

\begin{proof}
Recall from \cite{tgcm} that there is a canonical map
\[
\phi: \QZ^{n-t} \lra{} \G_{et}
\]
over $\Chat '$. Also the fixed coordinate provides an isomorphism (in the notation of \cite{tgcm})
\[
\Sect_{\G_{et}[p^k]} \cong \Chat '[y]/(j_k(y)).
\]
In Section 2.9 of \cite{tgcm} we defined 
\[
\phi_y : \Lk \lra{} \Chat '
\]
by $\phi_y(i) = \pi_i \phi^* (y)$, the image of $y$ in the component of $\Sect_{\Lk}$ corresponding to $i \in \Lk$. We then defined
\[
S = \Colim{k}\text{ }S_k = \Colim{k} \text{ } \{\phi_y(i)|i \in \Lk\}.
\]
Let $R_k \subset B_t'$ be defined analagously. Because $\phi_y$ is injective the finite sets $S_k$ and $R_k$ have the same cardinality.
Pulling the canonical map
\[
\QZ^{n-t} \lra{} B_t'\otimes \G_{et}
\]
back along the map $B_t' \lra{} \Chat '$ gives the commutative square
\[
\xymatrix{ \QZ^{n-t} \ar[r] \ar[d] & \QZ^{n-t} \ar[d] \\ \Chat '\otimes \G_{et} \ar[r] & B_t' \otimes \G_{et}.}
\]
The commutativity of the global sections of the $p^k$-torsion of the square implies that there is a surjection
\[
R_k \lra{} S_k.
\]
Because the sets are finite and have the same cardinality this implies the map is a bijection.
\end{proof}
This implies that $B_t$ is nonzero.

\begin{prop}
The map $L_t/I_t \lra{} B_t/(I_t+\bq)$ is faithfully flat.
\end{prop}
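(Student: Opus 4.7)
The plan is to identify $B_t/(I_t + \bq)$ with $C_t/I_t$ and then to invoke the faithful flatness of $L_t \to C_t$ established in \cite{tgcm}.

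To compute $B_t/(I_t + \bq)$, I exploit that $I_t + (\bq)$ is precisely the ideal of completion appearing in the definitions of both $\Bprime$ and $B_t = (R^{-1}\Bprime)^{\wedge}_{I_t+(\bq)}$, so both completions disappear upon quotienting and localization commutes with the quotient:
\[
B_t/(I_t + \bq) \;\cong\; R^{-1}\bigl(\Colim{k}\,(L_t\powser{\bq} \otimes_{\E^0\powser{\bq}} \E^0\powser{\bq,\bA}/([p^k]\bA-\bq))/(I_t+\bq)\bigr).
\]
At each finite level, setting $\bq = 0$ converts the relation $[p^k]\bA - \bq$ into $[p^k]\bA$ and collapses the tensor product to $K\powser{\bA}/[p^k]\bA$, where $K = L_t/I_t$. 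This is the $k$th stage of the colimit defining $C_t'/I_t$. The only nonformal step is to verify that the transition maps for $\Bprime$, which send $\bq \mapsto \bq$ and $\bA \mapsto [p]\bA$, reduce modulo $(I_t+\bq)$ to the transition maps used in the colimit defining $C_t'/I_t$. Together these give $\Bprime/(I_t+\bq) \cong C_t'/I_t$, hence $B_t/(I_t+\bq) \cong R^{-1}(C_t'/I_t)$.

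The preceding proposition establishes that the canonical map $\Bprime \to \Chat$ carries the set $R$ bijectively onto $S$, so the localizations $R^{-1}(C_t'/I_t)$ and $S^{-1}(C_t'/I_t)$ coincide. The latter is $C_t/I_t$, since the two outer completions built into $\Chat$ vanish modulo $I_t$ (the ideal of completion). Thus $B_t/(I_t+\bq) \cong C_t/I_t$, and the statement follows by base-changing the faithful flatness of $L_t \to C_t$ from \cite{tgcm} along $L_t \to L_t/I_t$.

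I expect the only real obstacle to be the bookkeeping in the first step: verifying that completion, localization, and the filtered colimit each commute with the quotient by $I_t + (\bq)$, and matching the transition maps of the two colimits after reduction. Everything else is either a cited input or a formal consequence of the preceding proposition.
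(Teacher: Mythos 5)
Your proposal is correct and follows essentially the same route as the paper: the paper's proof simply notes the isomorphism $B_t/(I_t+\bq) \cong C_t/I_t$ and then cites Proposition 2.18 of \cite{tgcm}, exactly the reduction you carry out (you merely spell out the colimit-level identification, the vanishing of the completions modulo the ideal of definition, and the use of the $R \leftrightarrow S$ bijection, which the paper leaves implicit). So your extra bookkeeping is fine but not a different argument.
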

\begin{proof}
Note that 
\[
B_t/(I_t+\bq) \cong C_t/I_t.
\]
Now this follow immediately from Proposition 2.18 in \cite{tgcm}.
\end{proof}

\section{Transchromatic Twisted Character Maps}
The transchromatic twisted character map is defined to be the composition of two maps. The first map is induced by a map of topological spaces and the second one is algebraic in nature.
\subsection{The Topological Map}
The transchromatic twisted character map is the composition of two maps - a topological map and an algebraic map. In this section we describe the topological map. It is induced by a map of topological spaces
\[
BT(\gamma_k) \times_{B\QZ^{n-t}} \Twist_{n-t}(X) \lra{} EG \times_G X
\]
where $X$ is a finite $G$ CW-complex. In this map, the domain is a pullback and the codomain is the Borel construction.
Our first goal is to define the functor $\Twist_{n-t}(-)$ from finite $G$-spaces to spaces. It plays the role of the functor $EG\times_G \Fix_{n-t}(-)$ of \cite{hkr} and \cite{tgcm}. We will give two equivalent definitions of the functor. The first is simpler to understand but requires some choices. The second construction is free of any choices.

Given a continuous map of groups 
\[
\al:\Z_{p}^{h} \lra{} G,
\]
we can form the pushout in abelian groups
\[
\xymatrix{\Z_{p}^{h} \ar[r] \ar[d]^{\al} & \Q_{p}^{h} \ar[d] \\ \im \al \ar[r] & \im \al \oplus_{\Z_{p}^{h}} \Q_{p}^{h}.}
\]

\begin{prop} \label{pushout}
The pushout extends on the left to the centralizer of the image of $\al$
\[
C(\im \al) = \{g \in G | ghg^{-1} = h \text{ } \forall h \in \im \al\}.
\]
\end{prop}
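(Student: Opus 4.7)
The plan is to construct $T(\al) = C(\im \al) \oplus_{\Z_p^h} \Q_p^h$ explicitly as a pushout in the category of groups and verify that, upon restriction along $\im \al \hookrightarrow C(\im \al)$, it receives the original pushout of abelian groups. The single observation that makes everything work is that $\im \al$ is contained in the center of $C(\im \al)$, which is immediate from the definition of the centralizer: every $g \in C(\im \al)$ commutes with every $h \in \im \al$. So $\al: \Z_p^h \to C(\im \al)$ has central image.

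Given this centrality, I would define
\[
T(\al) := \bigl( C(\im \al) \times \Q_p^h \bigr) \big/ K, \qquad K = \{(\al(a), -a) \mid a \in \Z_p^h\}.
\]
Because $\al$ is a group homomorphism, $K$ is a subgroup of $C(\im \al) \times \Q_p^h$; because each $\al(a)$ is central in $C(\im \al)$ and $-a$ lies in the abelian group $\Q_p^h$, every element of $K$ is central in the product, hence $K$ is normal. The natural structure maps are $C(\im \al) \to T(\al)$, $c \mapsto [(c, 0)]$, and $\Q_p^h \to T(\al)$, $q \mapsto [(e, q)]$, and they agree on $\Z_p^h$: for $a \in \Z_p^h$ the classes $[(\al(a), 0)]$ and $[(e, a)]$ differ precisely by $(\al(a), -a) \in K$.

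To verify the pushout property, I would check that any pair of homomorphisms $\phi: C(\im \al) \to H$ and $\psi: \Q_p^h \to H$ which agree on $\Z_p^h$ (and whose images commute, which is the appropriate compatibility once one factor is abelian and enters the construction as a central extension) factors uniquely through $T(\al)$ via $[(c,q)] \mapsto \phi(c)\psi(q)$; well-definedness uses centrality of $\al(\Z_p^h)$ in the image of $\phi$, and uniqueness is clear from surjectivity of $C(\im \al) \times \Q_p^h \to T(\al)$. Restricting the resulting square along $\im \al \hookrightarrow C(\im \al)$ manifestly recovers the original abelian pushout $\im \al \oplus_{\Z_p^h} \Q_p^h$, justifying the phrase ``extends on the left.''

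The mild obstacle is notational rather than mathematical: one must be careful that the pushout is formed in groups, not in abelian groups, since $C(\im \al)$ need not be abelian. The centrality of $\im \al$ in $C(\im \al)$ is exactly what lets the construction go through and produces a central extension, giving the short exact sequence $0 \to C(\im \al) \to T(\al) \to \QZ^h \to 0$ used in the remainder of the section.
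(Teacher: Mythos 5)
Your proof is correct and takes essentially the same route as the paper: both rest on the single observation that $\im \al$ is central in $C(\im \al)$, and both realize $C(\im \al)\oplus_{\Z_{p}^{h}}\Q_{p}^{h}$ as the quotient of $C(\im \al)\times \Q_{p}^{h}$ identifying $(\al(i),0)$ with $(0,i)$. The only difference is packaging: you quotient by the central subgroup $K$ so that the group structure is automatic (and you record a restricted universal property), whereas the paper verifies directly on representatives $[g,t]$ that the multiplication $[g,t][h,s]=[gh,t+s]$ is well-defined.
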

\begin{proof}
Let $g,h \in C(\im \al)$, $s,t \in \Q_{p}^{h}$, and $i,j \in \Z_{p}^{h}$. We will represent an element of the pushout by $[g,t]$ with the relation
\[
[g,t+i] = [g\al(i),t].
\]
We prove that multiplication is well-defined. We have
\begin{align*}
[g,t+i][h,s+j] &= [g\al(i),t][h\al(j),s]\\
&= [g\al(i)h\al(j),t+s]
\end{align*}
and
\begin{align*}
[g,t+i][h,s+j] &= [gh,t+s+i+j]\\
&= [gh\al(i+j),t+s].
\end{align*}
Now $gh\al(i+j) = g\al(i)h\al(j)$ because $\al$ is a homomorphism and because $g,h$ are in the centralizer of the image of $\al$.
\end{proof}
\begin{definition}
Let
\[
T(\al) = C(\im \al)\oplus_{\Z_{p}^{h}} \Q_{p}^{h}.
\]
\end{definition}
\begin{prop}
There is a short exact sequence
\[
0 \lra{} C(\im \al) \lra{} T(\al) \lra{} \QZ^{h} \lra{} 0.
\]
\end{prop}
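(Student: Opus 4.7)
The plan is to work with the explicit model of $T(\al)$ given in the proof of Proposition \ref{pushout}: elements are equivalence classes $[g,t]$ with $g \in C(\im \al)$ and $t \in \Q_{p}^{h}$, identified under the relation $[g, t+i] = [g\al(i), t]$ for $i \in \Z_{p}^{h}$, and with multiplication $[g,t][h,s] = [gh, t+s]$. I would define the two maps of the sequence by
\[
\iota: C(\im \al) \lra{} T(\al), \qquad g \mapsto [g, 0],
\]
\[
\pi: T(\al) \lra{} \QZ^{h}, \qquad [g, t] \mapsto t + \Z_{p}^{h}.
\]

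First I would verify that $\pi$ is well-defined on equivalence classes (the two sides of the defining relation have $\Q_{p}^{h}$-coordinates differing by $i \in \Z_{p}^{h}$) and that both maps are group homomorphisms. Surjectivity of $\pi$ is immediate, since any $t \in \Q_{p}^{h}$ yields $\pi([e,t]) = t + \Z_{p}^{h}$. The composite $\pi \circ \iota$ is manifestly zero, so $\im \iota \subseteq \ker \pi$; conversely, if $\pi([g,t]) = 0$ then $t \in \Z_{p}^{h}$, and applying the defining relation with the roles of $i$ and base $0$ gives $[g, t] = [g\al(t), 0] = \iota(g\al(t))$, proving $\ker \pi = \im \iota$.

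The main obstacle is injectivity of $\iota$, since the pushout is taken in a category where the target ``$\im \al$'' has been enlarged to the possibly non-abelian group $C(\im \al)$ and one must rule out additional identifications. The plan is to identify $T(\al)$ with the quotient of the direct product $(C(\im \al) \times \Q_{p}^{h})/K$, where $K = \{(\al(-i), i) : i \in \Z_{p}^{h}\}$. Because $\al(\Z_{p}^{h}) \subseteq \im \al$ is central in $C(\im \al)$ (by the defining property of the centralizer) and $\Q_{p}^{h}$ is abelian, $K$ is a central subgroup of the product, so the quotient inherits a group structure. A direct calculation then shows that the pair $(g, t+i)$ and $(g\al(i), t)$ differ by $(\al(-i), i) \in K$, so the equivalence relation defining $T(\al)$ coincides with the coset relation of $K$, yielding the identification. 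Under this identification $\ker \iota$ consists of those $g \in C(\im \al)$ with $(g, 0) \in K$, which forces $i = 0$ in the defining equation and hence $g = e$.
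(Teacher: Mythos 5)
Your argument is correct: the paper dismisses this proposition with ``This is clear,'' and your proof simply fills in the obvious details along the only natural route, namely the explicit model $[g,t]$ of the pushout, the maps $g\mapsto[g,0]$ and $[g,t]\mapsto t+\Z_p^h$, and the identification of $T(\al)$ with $(C(\im\al)\times\Q_p^h)/K$ for the central subgroup $K=\{(\al(-i),i)\}$ (centrality holding because $\im\al$ is abelian and lies in the center of $C(\im\al)$), which settles injectivity of the inclusion. No gaps; this is essentially the same (implicit) approach as the paper's.
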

\begin{proof}
This is clear.
\end{proof}
\begin{example}
Let $\gamma_k:\Z_{p}^{h} \lra{} \Lk = (\Zp{k})^{h}$ be the quotient. Then $T(\gamma_k)$ is the pushout
\[
\Lk\oplus_{\Z_{p}^{h}} \Q_{p}^{h}.
\] 
Note that this is isomorphic to $\QZ^{h}$ and that this is the middle term of a short exact sequence
\[
\xymatrix{\Z_{p}^{h} \ar[r] \ar[d]^{\gamma_k} & \Q_{p}^{h} \ar[d] \ar[r] & \QZ^{h} \ar[d]^{=} \\ \Lk \ar[r] & \Lk\oplus_{\Z_{p}^{h}} \Q_{p}^{h} \ar[r]^(.6){\times p^k} & \QZ^{h}.}
\]
\end{example}

\begin{example} \label{presentation}
Next we work a slightly more complicated example. Let
\[
\al:\Z_{p}^{h} \lra{} \Zp{k}.
\]
We will try to understand the relationship between $T(\al)$ and $\QZ^{h}$ through the quotient map $T(\al) \lra{c} \QZ^{h}$ in terms of their duals. We call it ``$c$" for cokernal. First, the dual of $\QZ^{h}$ is $\Z_{p}^{h}$ and the fact that the quotient map is surjective implies that the dual map is injective. Now let $i_1,\ldots,i_h$ be the image of the basis elements of $\Z_{p}^{h}$ in $\Zp{k}$. Consider the inclusion
\[
\Zp{k} \lra{f} \QZ
\]
given by 
\[
i \mapsto \frac{i}{p^k},
\]
where, for convenience, we consider $0$ to map to $1 \in \QZ$. 
Elements of $T(\al)$ can be put in the form 
\[
(i,(\frac{a_1}{p^{k_1}},\ldots,\frac{a_h}{p^{k_h}})),
\]
where $i \in \Zp{k}$, $0 \leq \frac{a_j}{p^{k_j}} < 1$, and $(0,(0,\ldots,1,\ldots,0)) = (i_j,(0,\ldots,0))$ when the $1$ is in the $j$th place.
Consider the composition
\[
\xymatrix{T(\al) \ar[r]^c & \QZ^h \ar[d]^{\pi_j} \\ & \QZ,} 
\]
where $\pi_j$ is the projection onto the $j$th factor. The composition $\pi_j \circ c$ gives the image of the element $b_j = (0,\ldots,0, 1,0,\ldots,0) \in \Z_{p}^{h}$ in $T(\al)^*$, where the $1$ is in the $j$th place. We construct a map
\[
T(\al) \lra{x} \QZ
\]
by sending
\[
(i,(\frac{a_1}{p^{k_1}},\ldots,\frac{a_h}{p^{k_h}})) \mapsto f(i)+\sum_{j = 1}^{h}f(i_j)\frac{a_j}{p^{k_j}}.
\]
A quick computation gives the relation
\[
p^k x = i_1(\pi_1 \circ c)+\ldots +i_h (\pi_h \circ c).
\]
Next note that the map
\[
(\pi_1 \circ c, \ldots, \pi_h \circ c, x):T(\al) \lra{} \QZ^{h+1}
\]
is a monomorphism so the dual is an epimorphism from a free $\Z_p$-module. Thus $x$ generates the part of $T(\al)^*$ not hit by $c^*$ and satisfies the relation above. Finally we conclude that $T(\al)^*$ is the $\Z_p$-module with the following presentation
\[
\{b_1,\ldots,b_h,x|p^kx = i_1b_1+\ldots+i_hb_h\}. 
\]
\end{example}

Let $G$ be a finite group and $X$ a finite $G$-space (equivalent to a finite $G$-CW complex).

\begin{prop} \label{actionextends}
The action on the fixed point space $X^{\im \al}$ by $C(\im \al)$ extends to an action by $T(\al)$.
\end{prop}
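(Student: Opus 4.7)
The plan is to exploit the universal property of the pushout defining $T(\al)$. A homomorphism out of $T(\al)$ into a group $H$ is the same data as a pair of homomorphisms $C(\im \al) \to H$ and $\Q_p^h \to H$ whose restrictions to $\Z_p^h$ agree (via $\al$ on the one hand, and inclusion on the other). Moreover, since the image of $\Q_p^h$ in $T(\al)$ is central (because $[e,t]$ commutes with every $[g,0]$ in the presentation of Proposition \ref{pushout}), this pair of homomorphisms must have commuting images.

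So, to produce a $T(\al)$-action on $X^{\im \al}$ extending the existing $C(\im \al)$-action, I would apply the universal property above with $H = \Aut(X^{\im \al})$. Two pieces of data need to be supplied: the given $C(\im \al)$-action, and a $\Q_p^h$-action on $X^{\im \al}$ that commutes with the $C(\im \al)$-action and that restricts to the already-specified $\Z_p^h$-action via inclusion $\Z_p^h \hookrightarrow \Q_p^h$.

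The key observation is that the $\Z_p^h$-action on $X^{\im \al}$ inherited from the $C(\im \al)$-action via $\al: \Z_p^h \to C(\im \al)$ factors through $\im \al \subseteq C(\im \al)$. But by very definition of the fixed-point space, every element of $\im \al$ acts as the identity on $X^{\im \al}$. Hence the induced $\Z_p^h$-action is trivial. I would therefore define the required $\Q_p^h$-action on $X^{\im \al}$ to be the trivial action; this trivially extends the $\Z_p^h$-action and trivially commutes with the $C(\im \al)$-action.

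By the universal property of the pushout, this data assembles into a $T(\al)$-action on $X^{\im \al}$ whose restriction to $C(\im \al)$ is the original action, completing the proof. There isn't really a main obstacle here: the heart of the argument is the single observation that $\im \al$ acts trivially on $X^{\im \al}$, so restricting the $C(\im \al)$-action along $\al$ yields the trivial $\Z_p^h$-action, which admits the manifestly compatible trivial extension to $\Q_p^h$.
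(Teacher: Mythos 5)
Your proposal is correct and is essentially the paper's argument in universal-property clothing: the paper simply defines $[g,t]x = gx$ and checks well-definedness, which rests on exactly the observation you isolate, namely that $\im \al$ (hence the $\Z_p^h$-action pulled back along $\al$) acts trivially on $X^{\im \al}$. The pushout/centrality packaging produces the same action, so no substantive difference.
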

\begin{proof}
We will represent elements of $T(\al)$ as tuples of the form $[g,t]$ where $g \in C(\im \al)$ and $t \in \Q_{p}^{h}$. We define
\[
[g,t]x = gx.
\]
We see that if $j \in \Z_{p}^{h} \subseteq \Q_{p}^{h}$ then
\[
x = [1,j]x = [\al(j),0]x = \al(j)x = x
\]
as $x \in X^{\im \al}$. Thus the action is well-defined.
\end{proof}

Recall from \cite{tgcm} that the set of continuous homomorphisms $\hom(\Z_{p}^{h},G)$ is a $G$-set under conjugation.

The following definition is fundamental to our work here:
\begin{definition}
For a $G$-space $X$, let
\[
\Twist_h(X) = \Coprod{[\al] \in \hom(\Z_{p}^{h},G)/\sim} ET(\al)\times_{T(\al)} X^{\im \al},
\]
where the coproduct is over conjugacy classes.
\end{definition}

This is analagous to the equivalence
\[
EG\times_G \Fix_h(X) \simeq \Coprod{[\al] \in \hom(\Z_{p}^{h},G)/\sim} EC(\im \al) \times_{C(\im \al)} X^{\im \al}.
\]

\begin{remark} \label{T-action}
There is an alternative way to view the relationship between $\Twist_h(X)$ and $EG\times_G \Fix_h(X)$. There is a $\mathbb{T} = (S^1)^{\times h}$ action on
\[
\Coprod{[\al] \in \hom(\Z_{p}^{h},G)/\sim} EC(\im \al) \times_{C(\im \al)} X^{\im \al}.
\]
The action is induced componentwise. We begin by treating the case when $X = *$. Thus fix an $[\al] \in \hom(\Z_{p}^{h},G)/\sim$ and consider, by precomposition with $\Z^{h} \lra{} \Z_{p}^{h}$, the map
\[
\Z^{h}\times C(\im \al) \lra{} \Z_{p}^{h} \times C(\im \al) \lra{+_{\al}} C(\im \al),
\]
where
\[
+_{\al} : (s, g) \mapsto \al(s)+g.
\]
Now applying $B(-)$ to the map gives an action of $\mathbb{T}$ on $BC(\im \al)$ and the Borel construction gives the $p$-complete equivalence
\[
E\mathbb{T} \times_{\mathbb{T}} BC(\im \al) \simeq BT(\al).
\]
It is not hard to see that the above construction extends to give a $\mathbb{T}$-action on $EC(\im \al)\times_{C(\im \al)} X^{\im \al}$. Thus, up to $p$-completion, $\Twist_h(X)$ is the homotopy orbits for a $\mathbb{T}$-action on
\[
\Coprod{[\al] \in \hom(\Z_{p}^{h},G)/\sim} EC(\im \al) \times_{C(\im \al)} X^{\im \al}.
\]
\end{remark}

\begin{example}
Note that if $X = *$ and $G = e$ then
\[
\Twist_{h}(*) \cong (B\QZ)^h.
\]
\end{example}

Now we provide a more canonical form for the functor $\Twist_h(-)$. Let $\al, \beta:\Z_{p}^{h} \lra{} G$ and let
\[
C(\al, \beta) = \{g\in G| g\al g^{-1} = \beta\}.
\]

\begin{prop}
The set $C(\al,\beta)$ is a right $\Z_{p}^{h}$-set through $\al$.
\end{prop}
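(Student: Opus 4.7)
The plan is to define the right action by the formula $g \cdot i := g \alpha(i)$, where the multiplication on the right takes place in $G$, and then verify the three items: (i) $g\alpha(i)$ still lies in $C(\alpha,\beta)$, (ii) this is a right action, and (iii) it is continuous in the sense appropriate for a $\Z_p$-module (that is, it factors through some finite quotient, since $G$ is discrete).

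For (i), the computation to carry out is
\[
(g\alpha(i))\,\alpha\,(g\alpha(i))^{-1} = g\,\bigl(\alpha(i)\,\alpha\,\alpha(i)^{-1}\bigr)\,g^{-1}.
\]
Since $\Z_p^h$ is abelian and $\alpha$ is a homomorphism, $\im\alpha$ is abelian, so $\alpha(i)$ commutes with every $\alpha(j)$; hence $\alpha(i)\,\alpha\,\alpha(i)^{-1} = \alpha$, and the expression reduces to $g\alpha g^{-1} = \beta$. This is really the only nontrivial point, and it is the reason one does not just get an action of the centralizer of $g$ but genuinely an action of $\Z_p^h$ through $\alpha$. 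Item (ii) is then immediate: $(g\cdot i)\cdot j = g\alpha(i)\alpha(j) = g\alpha(i+j) = g\cdot(i+j)$ and $g\cdot 0 = g$. For (iii), because $G$ is finite and discrete, $\ker\alpha$ is an open subgroup of $\Z_p^h$ and therefore contains $p^k\Z_p^h$ for some $k$; the action accordingly factors through the finite group $\Z_p^h/p^k\Z_p^h$, so it is continuous in the profinite sense.

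There is essentially no obstacle here beyond being careful with the conjugation identity in step (i); the rest is formal. This formulation is also the right setup for the sequel, since the stabilizer of $g$ under this action is $C(\im\alpha)$, and the quotient-like data furnished by $\alpha$ is exactly what is needed to splice $C(\alpha,\beta)$ together with $\Q_p^h$ in the spirit of the $T(\alpha)$ construction defined above.
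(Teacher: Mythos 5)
Your proposal is correct and follows the same route as the paper: the action is $g\cdot i = g\alpha(i)$, and the key verification is the conjugation identity $g\alpha(i)\,\alpha\,\alpha(i)^{-1}g^{-1} = g\alpha g^{-1} = \beta$, which holds because $\im\alpha$ is abelian. The extra checks of the action axioms and continuity are fine but routine additions to what the paper records.
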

\begin{proof}
For $j \in \Z_{p}^{h}$ and $g \in C(\al,\beta)$ let 
\[
gj = g\al(j).
\]
This is well-defined because
\[
g\al(j)\al\al(j)^{-1}g^{-1} = g \al g^{-1} = \beta.
\]
\end{proof}

Now let $T(\al,\beta)$ be the coequalizer of
\[
\xymatrix{C(\al,\beta) \times \Z_{p}^{h} \times \Q_{p}^{h} \ar@<1ex>[r] \ar[r] & C(\al,\beta) \times \Q_{p}^{h},}
\]
where one map is induced by the action of $\Z_{p}^{h}$ on $C(\al,\beta)$ described above and the other is induced by the standard action of $\Z_{p}^{h}$ on $\Q_{p}^{h}$.
There is a map 
\[
T(\al,\beta) \times X^{\im \al} \lra{c} X^{\im \beta}
\]
given by 
\[
[g,t]x = gx \in X^{\im g \al g^{-1}} = X^{\im \beta}.
\]
The proof that it is well defined is similar to the proof of Prop \ref{actionextends}. Note that $T(\al) = T(\al,\al)$.

There is a natural composition
\[
T(\beta, \delta) \times T(\al,\beta) \lra{} T(\al,\delta)
\]
given by
\[
[h,s] \times [g,t] \mapsto [hg,s+t].
\]
Also $T(\al,\al)$ contains an identity: the identity element $e \in G$.

\begin{definition}
Let $\Twist_{h}^{gpd}(X)$ be the geometric realization of the nerve of the topological groupoid 
\[
\xymatrix{\Coprod{(\al,\beta) \in \hom(\Z_{p}^{h},G)^{\times 2}}T(\al,\beta)\times X^{\im \al} \ar[d]_{d} \ar@<1ex>[d]^{c} \\ \Coprod{\al \in \hom(\Z_{p}^{h},G)} X^{\im \al},}
\]
where the domain map $d$ is the projection and the codomain map $c$ is the map defined above. 
\begin{comment}
$\Coprod{\al \in \hom(\Z_{p}^{h},G)} X^{\im \al}$ and morphisms the space $\Coprod{(\al,\al')} X^{\im \al} \times T(\al,\al')$ where the coproduct is taken over ordered pairs. The domain map is the obvious projection. The codomain map is the map described in ?? above.
\end{comment}
\end{definition}

\begin{prop}
We have the following equivalence for all finite $G$-spaces $X$:
\[
\Twist_{h}(X) \simeq \Twist_{h}^{gpd}(X).
\]
\end{prop}
\begin{proof}
We will construct a map of topological groupoids 
\[
\xymatrix{\Coprod{[\al] \in \hom(\Z_{p}^{h},G)/\sim}T(\al)\times X^{\im \al} \ar[d]_{d} \ar@<1ex>[d]^{c} \ar[r]^{f_{mor}} & \Coprod{(\al,\beta) \in \hom(\Z_{p}^{h},G)^{\times 2}}T(\al,\beta)\times X^{\im \al} \ar[d]_{d} \ar@<1ex>[d]^{c} \\ \Coprod{[\al] \in \hom(\Z_{p}^{h},G)/\sim} X^{\im \al} \ar[r]^{f_{ob}} & \Coprod{\al \in \hom(\Z_{p}^{h},G)} X^{\im \al}}
\]
that is an equivalence in the sense of Corollary 4.8 and Definition B.15 in \cite{Carchedi}. 

Fix a collection of representatives of the conjugacy classes of $\hom(\Z_{p}^{h},G)/\sim$. On objects we'll send
\[
\Coprod{[\al] \in \hom(\Z_{p}^{h},G)/\sim}X^{\im \al} \lra{} \Coprod{\alpha \in \hom(\Z_{p}^{h},G)} X^{\im \al}
\]
by using the representatives of the conjugacy classes.
%The first coproduct is over conjugacy classes in $\hom(\Z_{p}^{h},G)$ and we will fix a choice of map in each conjugacy class. 

The map on morphisms
\[
\Coprod{[\al] \in \hom(\Z_{p}^{h},G)/\sim} T(\al) \times X^{\im \al} \lra{} \Coprod{(\alpha,\beta)}  T(\alpha,\beta) \times X^{\im \alpha}
\]
sends $([g,t],x) \in T(\al) \times X^{\im \al} \mapsto ([g,t],x) \in T(\al,\al) \times X^{\im \al}$. The commutativity of the necessary diagrams is clear.

To ease the notation let 
\[
(H_0,H_1) = \Big(\Coprod{[\al] \in \hom(\Z_{p}^{h},G)/\sim} X^{\im \al},\Coprod{[\al] \in \hom(\Z_{p}^{h},G)/\sim}T(\al)\times X^{\im \al}\Big)
\]
and
\[
(G_0,G_1) = \Big(\Coprod{\al \in \hom(\Z_{p}^{h},G)} X^{\im \al},\Coprod{(\al,\beta) \in \hom(\Z_{p}^{h},G)^{\times 2}}T(\al,\beta)\times X^{\im \al}\Big).
\]
The map of topological groupoids above is then just
\[
\xymatrix{H_1 \ar[d]_{d} \ar@<1ex>[d]^{c} \ar[r]^{f_{mor}} & G_1 \ar[d]_{d} \ar@<1ex>[d]^{c} \\ H_0 \ar[r]^{f_{ob}} & G_0.}
\]
First consider the pullback 
\[
G_1\times_{G_0}H_0
\]
\begin{comment}
\[
\Coprod{(\al,\beta) \in \hom(\Z_{p}^{h},G)^{\times 2}}T(\al,\beta)\times X^{\im \al} \times_{\Coprod{\al \in \hom(\Z_{p}^{h},G)} X^{\im \al}} \Coprod{[\al] \in \hom(\Z_{p}^{h},G)/\sim} X^{\im \al}
\]
\end{comment}
along the maps $d$ and $f_{ob}$. To show that $f$ is essentially surjective we must show that 
\[
G_1\times_{G_0}H_0 \lra{c \circ \pi_1} G_0
\]
\begin{comment}
\[
\Coprod{(\al,\beta) \in \hom(\Z_{p}^{h},G)^{\times 2}}T(\al,\beta)\times X^{\im \al} \times_{\Coprod{\al \in \hom(\Z_{p}^{h},G)} X^{\im \al}} \Coprod{[\al] \in \hom(\Z_{p}^{h},G)/\sim} X^{\im \al} \lra{d \circ \pi_1} \Coprod{\al \in \hom(\Z_{p}^{h},G)} X^{\im \al}  
\]
\end{comment}
admits local sections, where $\pi_1$ is the projection onto the first factor. Since $f_{ob}$ maps components homeomorphically to components the pullback is just the components of 
\[
\Coprod{(\al,\beta) \in \hom(\Z_{p}^{h},G)^{\times 2}}T(\al,\beta)\times X^{\im \al}
\]
with $\al$ in the chosen reprentatives of the conjugacy classes. Now for a fixed $\beta:\Z_{p}^{h} \lra{} G$, choosing an element of $[g,t] \in T(\al,\beta)$ allows one to construct a section of $c$,
\[
X^{\im \beta} \lra{} T(\al,\beta) \times X^{\im \al},
\]
mapping
\[
x \mapsto ([g,t], g^{-1}x).
\]

To show that $f$ is fully faithful we must show that the following square is a pullback:
\[
\xymatrix{H_1 \ar[d]^{(d,c)} \ar[r]^{f_{mor}} & G_1 \ar[d]^{(d,c)} \\ H_0 \times H_0 \ar[r]^{(f_{ob},f_{ob})} & G_0\times G_0.}
\]
\begin{comment}
\[
\xymatrix{\Coprod{[\al] \in \hom(\Z_{p}^{h},G)/\sim}T(\al)\times X^{\im \al} \ar[d]_{(d,c)} \ar[r]^{f_{mor}} & \Coprod{(\al,\beta) \in \hom(\Z_{p}^{h},G)}T(\al,\beta)\times X^{\im \al} \ar[d]_{(d,c)} \\ \Coprod{[\al] \in \hom(\Z_{p}^{h},G)/\sim} X^{\im \al} \times \Coprod{[\al] \in \hom(\Z_{p}^{h},G)/\sim} X^{\im \al} \ar[r]^{(f_{ob},f_{ob})}& \Coprod{\al \in \hom(\Z_{p}^{h},G)} X^{\im \al} \times \Coprod{\al \in \hom(\Z_{p}^{h},G)} X^{\im \al}.}
\]
\end{comment}
But this is clear, for if $[\al] \neq [\beta]$, then the preimage of $X^{\im \al} \times X^{\im \beta}$ on the right hand side of the square is empty.

Now it follows from Corollary 4.8 of \cite{Carchedi} that the fat geometric realization of the map $f$ is an equivalence. The nerves of the topological groupoids are ``good'' simplicial spaces in the sense of \cite{Segal-categories} Definition A.4 and this implies that the geometric realization is canonically equivalent to the fat geometric realization by Theorem A.1 of \cite{Segal-categories}. We conclude that the geometric realization of $f$ is an equivalence.
\end{proof}

When multiple groups are in use, we will write $\Twist_{h}^{G}(-)$ to make it clear what group $\Twist_h(-)$ depends on.

A critical property of $\Twist_{h}(-)$ is the way that it interacts with abelian subgroups of $G$.

\begin{prop}
Let $H \subseteq G$ be an abelian subgroup of $G$, then 
\[
\Twist_{h}^{G}(G/H) \simeq \Twist_{h}^{H}(*).
\]
\end{prop}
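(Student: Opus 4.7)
The plan is to use the canonical groupoid model $\Twist_h^{gpd}(-)$ established in the previous proposition and construct a direct equivalence of topological groupoids
\[
F: \Twist_{h}^{H,gpd}(*) \lra{} \Twist_{h}^{G,gpd}(G/H)
\]
satisfying the hypotheses of Corollary 4.8 of \cite{Carchedi}. Since $H$ is abelian, conjugation on $\hom(\Z_p^h,H)$ is trivial, and for $\beta,\beta' \in \hom(\Z_p^h,H)$ one has $C_H(\beta,\beta') = H$ if $\beta = \beta'$ and empty otherwise. Hence the objects of $\Twist_{h}^{H,gpd}(*)$ are the discrete set $\hom(\Z_p^h,H)$ and the morphism space is $\Coprod{\beta} T_H(\beta)$ with $T_H(\beta) = H \oplus_{\Z_p^h}\Q_p^h$. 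Denote by $\iota:H\hookrightarrow G$ the inclusion. On objects, define $F(\beta) = (\iota\beta, eH)$; this is a valid object since $\iota\beta$ lands in $H$, so the trivial coset $eH$ is fixed by $\im\iota\beta$. On morphisms, use the evident inclusion $T_H(\beta) \hookrightarrow T_G(\iota\beta,\iota\beta)$ to send $[h,t]$ to $([h,t], eH)$; functoriality is immediate from the formulas for composition in $T(\al,\beta)$.

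For essential surjectivity, given an object $(\al, xH) \in \Twist_h^{G,gpd}(G/H)$, the condition $xH \in (G/H)^{\im \al}$ forces $x^{-1}\al x$ to factor through $H$, so $\beta := x^{-1}\al x \in \hom(\Z_p^h, H)$. Then $[x,0] \in T_G(\iota\beta, \al)$ defines a morphism $F(\beta) = (\iota\beta, eH) \lra{} (\al, xH)$, and the assignment $(\al, xH) \mapsto ([x,0], \beta)$ provides the required local section of $c \circ \pi_1$. For fully faithfulness we must identify the morphism spaces $F(\beta) \lra{} F(\beta')$ in $\Twist_h^{G,gpd}(G/H)$: such a morphism is $[g,t] \in T_G(\iota\beta, \iota\beta')$ with $g \cdot eH = eH$, i.e.\ $g \in H$. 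Since $H$ is abelian, existence of such $g$ forces $\iota\beta = g\iota\beta g^{-1} = \iota\beta'$, hence $\beta = \beta'$; and the relation $[h\iota\beta(j), t] = [h, t+j]$ in $T_G(\iota\beta)$, restricted to representatives with $h \in H$, is precisely the defining relation of $T_H(\beta)$. This gives an isomorphism $T_H(\beta) \lra{\cong} \{[h,t] \in T_G(\iota\beta) : h \in H\}$, which is exactly the content of the pullback square required for fully faithfulness.

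By Corollary 4.8 of \cite{Carchedi}, $F$ induces an equivalence on fat geometric realizations. Since the nerves involved are ``good'' in the sense of \cite{Segal-categories} (the inclusions of degeneracies are cofibrations, as the object spaces are discrete), Theorem A.1 of \cite{Segal-categories} identifies the ordinary and fat realizations, and combining with the equivalence $\Twist_h^{gpd}(-) \simeq \Twist_h(-)$ from the previous proposition yields the desired weak equivalence $\Twist_h^H(*) \simeq \Twist_h^G(G/H)$. The only subtle step is the fully faithfulness check, where one must verify that the inclusion of equivalence classes $T_H(\beta) \hookrightarrow T_G(\iota\beta)$ is injective with image exactly the $H$-locus; this is where abelianness of $H$ is essential and is the one place a bit of bookkeeping with the pushout relation is required.
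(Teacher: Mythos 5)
Your proposal is correct and follows essentially the same route as the paper: the same map of groupoid models sending $\beta \mapsto (\iota\beta, eH)$ via the inclusion of centralizers, the same essential surjectivity argument by conjugating with $x$, the same fully faithfulness check that $g$ must lie in $H$ (forcing $\beta=\beta'$ by abelianness and identifying $T_H(\beta)$ with the $H$-locus of $T_G(\iota\beta)$), and the same appeal to Corollary 4.8 of \cite{Carchedi} together with goodness of the nerves.
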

\begin{proof}
We produce an equivalence of topological groupoids that provides the desired equivalence after applying geometric realization. The map of topological groupoids takes the form
\[
\xymatrix{\Coprod{(\al,\al ') \in \hom(\Z_{p}^{h},H)^{\times 2}}T(\al,\al ')\times *^{\im \al} \ar[d]_{d} \ar@<1ex>[d]^{c} \ar[r] & \Coprod{(\beta,\beta ') \in \hom(\Z_{p}^{h},G)^{\times 2}}T(\beta,\beta ')\times (G/H)^{\im \beta} \ar[d]_{d} \ar@<1ex>[d]^{c} \\ \Coprod{\al \in \hom(\Z_{p}^{h},H)} *^{\im \al} \ar[r] & \Coprod{\beta \in \hom(\Z_{p}^{h},G)} (G/H)^{\im \beta}}
\]

Let $i: H \lra{} G$ denote the inclusion and let the map be defined on objects by sending $* \in *^{\im \al}$ to $eH \in (G/H)^{\im i \circ \al}$ and on maps by sending
\[
T(\al,\al ')\times*^{\im \al}
 \lra{} T(i \circ \al, i \circ \al ') \times eH
\]
via the inclusion $C(\al, \al ') \lra{} C(i \circ \al, i \circ \al ')$.

We prove that the map is fully faithful and essentially surjective. To prove that it is fully faithful we observe that the following square is a pullback:
\[
\xymatrix{\Coprod{(\al,\al ') \in \hom(\Z_{p}^{h},H)^{\times 2}}T(\al,\al ')\times *^{\im \al} \ar[d]_{(d,c)} \ar[r] & \Coprod{(\beta,\beta ') \in \hom(\Z_{p}^{h},G)^{\times 2}}T(\beta,\beta ')\times (G/H)^{\im \beta} \ar[d]_{(d,c)} \\ \Coprod{\al \in \hom(\Z_{p}^{h},H)} *^{\im \al} \times \Coprod{\al \in \hom(\Z_{p}^{h},H)} *^{\im \al} \ar[r] & \Coprod{\beta \in \hom(\Z_{p}^{h},G)} (G/H)^{\im \beta} \times \Coprod{\beta \in \hom(\Z_{p}^{h},G)} (G/H)^{\im \beta}.}
\]
Note that $eH \in (G/H)^{\im \beta}$ if and only if $\im \beta \subseteq H$. Also note that $T(\al, \al ') = \emptyset$ unless $\al = \al '$ because $H$ is abelian. The preimage of a point $(eH,eH) \in (G/H)^{\im \beta} \times (G/H)^{\im \beta '}$ consists of the collection $[g,t] \in T(\beta,\beta ')$ such that 
\[
gH = eH \in (G/H)^{\im g\beta g^{-1} = \beta '}.
\]
To have the equality $gH = eH$, $g$ must be an element of $H$. Thus $\beta = \beta '$ since $H$ is abelian. The square is a pullback because the subspace
\[
\{[g,t]| g \in H\} \subseteq T(\beta,\beta) 
\]
is homeomorphic to $T(\al,\al)$.

To prove that the map of topological groupoids is essentially surjective we must show that $c \circ \pi_1$:
\begin{comment}
\Coprod{(\beta,\beta ') \in \hom(\Z_{p}^{h},G)^2}T(\beta,\beta ')\times (G/H)^{\im \beta} \times_{\Coprod{\beta \in \hom(\Z_{p}^{h},G)} (G/H)^{\im \beta}} \Coprod{\al \in \hom(\Z_{p}^{h},H)} *^{\im \al} \lra{c \circ \pi_1}  \Coprod{\beta \in \hom(\Z_{p}^{h},G)}
\end{comment}
\begin{multline*}
\Big(\Coprod{(\beta,\beta ') \in \hom(\Z_{p}^{h},G)^2}T(\beta,\beta ')\times (G/H)^{\im \beta}\Big) \times_{\big(\Coprod{\beta \in \hom(\Z_{p}^{h},G)} (G/H)^{\im \beta}\big)} \Big(\Coprod{\al \in \hom(\Z_{p}^{h},H)} *^{\im \al}\Big) \\
\downarrow \\
\Coprod{\beta \in \hom(\Z_{p}^{h},G)} (G/H)^{\im \beta} \\
\end{multline*}
has local sections. As the codomain is a set it suffices to show that the map is surjective. Given
\[
gH \in (G/H)^{\im \beta '},
\]
we will produce an element in the domain that maps to it. Let $\beta = g^{-1}\beta 'g$. Then 
\[
eH = g^{-1}gH \in (G/H)^{\im \beta}
\]
so $\im \beta \subseteq H$. Now $[g,t] \in T(\beta,\beta ')$ sends $eH \in (G/H)^{\im \beta}$ to $gH \in (G/H)^{\im \beta '}$. Thus the map is a surjection.
\end{proof}

We note some more properties of $T(-)$. %may need C(\al \times \beta) \cong C(\al)\timesC(\beta)
\begin{prop}
Let $\text{Group}_{\Z_{p}^h/}^{fin}$ be the full subcategory of the category of topological groups and continuous maps under $\Z_{p}^h$ consisting of the finite groups. The map
\[
T: \al \mapsto T(\al)
\]
extends to a functor
\[
T: \text{Group}_{\Z_{p}^h/}^{fin} \lra{} \text{Group}.
\]
\end{prop}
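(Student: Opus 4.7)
The plan is to define $T$ on morphisms using the universal property of the pushout that defines $T(\al)$, and then to verify the two functor axioms. The main work is packaged in a single observation: a morphism $f:(\Z_p^h \lra{\al} G) \lra{} (\Z_p^h \lra{\beta} H)$ — that is, a group homomorphism $f: G \lra{} H$ with $f \circ \al = \beta$ — restricts to a homomorphism $f: C(\im \al) \lra{} C(\im \beta)$. Indeed, for $g \in C(\im \al)$ and any element $\beta(i) = f(\al(i)) \in \im \beta$, we have $f(g)\beta(i) = f(g\al(i)) = f(\al(i)g) = \beta(i)f(g)$, so $f(g) \in C(\im \beta)$.

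Next I would construct $T(f)$ using the universal property. The composite $C(\im \al) \lra{f} C(\im \beta) \lra{} T(\beta)$ together with the canonical map $\Q_p^h \lra{} T(\beta)$ forms a cocone under the span $C(\im \al) \lla{\al} \Z_p^h \lra{} \Q_p^h$: the two ways of going around the square agree because $f \circ \al = \beta$ is precisely the composite $\Z_p^h \lra{\beta} C(\im \beta) \lra{} T(\beta)$, which equals the composite $\Z_p^h \lra{} \Q_p^h \lra{} T(\beta)$ by construction of the pushout $T(\beta)$. From the explicit presentation given in the proof of Proposition \ref{pushout} — representatives $[g,t]$ with the relation $[g,t+i] = [g\al(i),t]$ and multiplication $[g,t][h,s]=[gh,t+s]$ — this construction does satisfy the universal property of a pushout in the category of all groups (not merely abelian groups), since the image of $\Z_p^h$ lies in the center of $C(\im \al)$. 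The universal property then yields a unique group homomorphism
\[
T(f): T(\al) \lra{} T(\beta), \qquad [g,t] \mapsto [f(g),t].
\]

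Finally, the functor axioms follow immediately from this formula. $T(\id_G)[g,t] = [g,t]$ gives $T(\id) = \id$, and for composable $f,f'$ in the under-category we have $T(f' \circ f)[g,t] = [f'(f(g)), t] = T(f')[f(g),t] = T(f')T(f)[g,t]$, so $T(f' \circ f) = T(f') \circ T(f)$. Since all groups in the subcategory are finite and discrete, there are no continuity subtleties to check. The only step requiring even a moment's thought is verifying that $f$ sends centralizers to centralizers; once that is in hand, the rest is a formal consequence of the universal property of the pushout.
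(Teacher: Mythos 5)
Your overall route is the same as the paper's: the paper's proof simply observes that $\al \mapsto C(\im\al)$ is a functor on $\text{Group}_{\Z_{p}^h/}^{fin}$, and your verification that a morphism $f$ under $\Z_p^h$ carries $C(\im\al)$ into $C(\im\beta)$ is exactly that point; the induced map $T(f):[g,t]\mapsto [f(g),t]$ and the functor axioms are then as you say. However, one justification in your write-up is wrong as stated: $T(\al)$ does \emph{not} satisfy the universal property of the pushout of $C(\im\al) \lla{} \Z_p^h \lra{} \Q_p^h$ in the category of all groups. That pushout is the amalgamated free product $C(\im\al)\ast_{\Z_p^h}\Q_p^h$, whereas $T(\al)$ is the quotient of the direct product $C(\im\al)\times\Q_p^h$ by the central subgroup $\{(\al(i),-i)\}$, so in $T(\al)$ the images of $C(\im\al)$ and $\Q_p^h$ commute. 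Centrality of $\im\al$ in $C(\im\al)$ does not repair this: for instance, if $\al$ is the trivial map and $G$ is nonabelian, then $T(\al)\cong G\times\QZ^h$, while the pushout in groups is $G\ast\QZ^h$, and a cocone into $G\ast\QZ^h$ given by the two canonical inclusions admits no factorization through $G\times\QZ^h$ since its legs do not commute.

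This flaw is easily repaired and does not affect your conclusion: the particular cocone you use lands in $T(\beta)$, where the images of $C(\im\beta)$ and $\Q_p^h$ do commute, so $T(\al)$ has the required (restricted) universal property for it; alternatively, one checks directly from the presentation that $[g,t]\mapsto[f(g),t]$ respects the relation $[g,t+i]=[g\al(i),t]$ (both sides map to $[f(g)\beta(i),t]$, using $f\circ\al=\beta$) and is multiplicative. With that one-line substitution your argument is complete and agrees with the paper's.
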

\begin{proof}
This follows in a straight-forward way from the fact that 
\[
C: \text{Group}_{\Z_{p}^h/}^{fin} \lra{} \text{Group},
\]
which sends $\al \mapsto C(\im \al)$, is a functor.
\end{proof}

\begin{prop}
\label{prod}
Let $\al:\Z_{p}^{h} \lra{} G$, $\beta:\Z_{p}^{h} \lra{} H$ and $\al\times\beta: \Z_{p}^{h} \lra{} G\times H$ be the product, then 
\[
T(\al \times \beta) \cong T(\al) \times_{\QZ^{h}} T(\beta).
\]
\end{prop}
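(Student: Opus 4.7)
The plan is to exhibit an explicit isomorphism using the pushout presentation $T(\al)=C(\im \al)\oplus_{\Z_{p}^{h}}\Q_{p}^{h}$ from Definition 3.4. The first observation is that centralizers behave well under products: an element $(g,h)\in G\times H$ commutes with every $(\al(z),\beta(z))$ if and only if $g$ centralizes $\im\al$ and $h$ centralizes $\im\beta$, so $C(\im(\al\times\beta))=C(\im\al)\times C(\im\beta)$. Combined with the universal property of the pushout, the two projections $(\al\times\beta)\to\al$ and $(\al\times\beta)\to\beta$ in $\text{Group}_{\Z_{p}^{h}/}^{fin}$ induce by functoriality a canonical map
\[
T(\al\times\beta)\lra{}T(\al)\times T(\beta).
\]
The short exact sequence of Proposition 3.5 shows that the quotient $T(-)\to\QZ^{h}$ is the same on both factors (it only records the $\Q_{p}^{h}$-coordinate modulo $\Z_{p}^{h}$), so this map lands in the fiber product $T(\al)\times_{\QZ^{h}}T(\beta)$.

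On representatives in the style of Proposition 3.7, the map is $[(g,h),t]\mapsto([g,t],[h,t])$. For injectivity, suppose $([g_1,t_1],[h_1,t_1])=([g_2,t_2],[h_2,t_2])$ in the fiber product. Equality in $T(\al)$ produces $i\in\Z_{p}^{h}$ with $g_2=g_1\al(i)$ and $t_2=t_1-i$, and equality in $T(\beta)$ produces $i'\in\Z_{p}^{h}$ with $h_2=h_1\beta(i')$ and $t_2=t_1-i'$. Since $t_1-t_2$ is a fixed element of $\Q_{p}^{h}$, we must have $i=i'$, whence $(g_2,h_2)=(g_1,h_1)(\al\times\beta)(i)$ and the two classes coincide in $T(\al\times\beta)$. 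For surjectivity, a general element of the fiber product is $([g,s],[h,t])$ with $s-t=i\in\Z_{p}^{h}$. The defining relation rewrites $[h,t]=[h\beta(i)^{-1},s]$, putting both coordinates at the same $\Q_{p}^{h}$-level, and then $[(g,h\beta(i)^{-1}),s]\in T(\al\times\beta)$ maps to it.

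Multiplication in the pushout is coordinate-wise on representatives, so the bijection is automatically a group homomorphism. The only point with any substance is the matching $i=i'$ in the injectivity step; once one tracks that both coordinates see the same $\Q_{p}^{h}$-difference $t_1-t_2$, the verification is otherwise a direct manipulation of the pushout description. If a more conceptual argument is desired, one could alternatively check that both sides corepresent the same functor on groups under $\Z_{p}^{h}$ with a compatible $\Q_{p}^{h}$-extension, but the explicit computation above is shorter.
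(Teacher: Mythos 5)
Your proposal is correct and follows essentially the same route as the paper: the paper's proof consists of noting that the maps $[g,t]\mapsto t$ to $\QZ^{h}$ make the fiber product meaningful and then writing down exactly your isomorphism $[(g,h),t]\mapsto([g,t],[h,t])$, leaving the verification implicit. Your identification $C(\im(\al\times\beta))=C(\im\al)\times C(\im\beta)$ and the injectivity/surjectivity checks are precisely the omitted details, and they are carried out correctly.
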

\begin{proof}
First note that the pullback does make sense because $T(\al)$ and $T(\beta)$ both come with canonical maps to $\QZ^{h}$ that send
\[
[g,t] \mapsto t.
\]
The isomorphism is given by
\[
[(g,h),t] \mapsto ([g,t],[h,t]).
\]
\end{proof}

Finally we give the construction of the topological part of the twisted character map from height $n$ to height $t$. Fix a finite group $G$. We produce it as the realization of a map of topological groupoids.

Let $k$ be such that every map $\Z_{p}^{n-t} \lra{} G$ factors through $\Lk = (\Zp{k})^{n-t}$. Let $\gamma_k:\Z_{p}^{n-t} \lra{} \Lk$ be the canonical quotient.

We begin by extending the morphism space of $\Twist_{n-t}^G(X)$.
\begin{prop}
The morphism space of $\Twist_{n-t}^G(X)$ can be extended to 
\[
\Coprod{[\al] \in \hom(\Z_{p}^{n-t},G)/\sim} T(\gamma_k\times \al)\times X^{\im \al}.
\]
\end{prop}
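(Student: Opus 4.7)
The plan is to exhibit $T(\gamma_k\times\al)$ as a canonical extension of $T(\al)$ by the finite group $\Lk$, and then to pull the groupoid structure of $\Twist_{n-t}^G(X)$ back along the resulting quotient in order to put a groupoid structure on the prescribed morphism space. First I would apply Proposition~\ref{prod} to identify
\[
T(\gamma_k\times\al)\cong T(\gamma_k)\times_{\QZ^{n-t}} T(\al).
\]
Combining this with the computation in Example~\ref{presentation} (the case of a constant target), which identifies $T(\gamma_k)\cong\QZ^{n-t}$ in such a way that the cokernel map $c\colon T(\gamma_k)\to\QZ^{n-t}$ corresponds to multiplication by $p^k$, one reads off that the projection onto the second factor
\[
\pi\colon T(\gamma_k\times\al)\longrightarrow T(\al)
\]
is a surjective group homomorphism with kernel canonically identified with $\ker(c)=\Lk$. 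Thus $T(\gamma_k\times\al)$ fits into a functorial short exact sequence with kernel $\Lk$ and quotient $T(\al)$, and in particular the projection $\pi$ admits local sections.

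Next I would define the groupoid structure on the candidate morphism space $\Coprod{[\al]}T(\gamma_k\times\al)\times X^{\im\al}$, keeping the object space $\Coprod{[\al]}X^{\im\al}$ of $\Twist_{n-t}^G(X)$ unchanged. The domain map is the projection onto $X^{\im\al}$. The codomain map, composition, and identities are obtained from the corresponding structure maps of $\Twist_{n-t}^G(X)$ by precomposing with $\pi$ and the product $\pi\times\pi$. Since $\pi$ is a group homomorphism, associativity and the unit axioms follow immediately from the corresponding axioms already established for $\Twist_{n-t}^G(X)$, so the result is honestly a topological groupoid.

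The only piece of real content is to check that this extended groupoid is equivalent to $\Twist_{n-t}^G(X)$, so that passing to geometric realization recovers the same space. The identity on objects together with $\pi$ on morphisms defines a morphism of topological groupoids; it is fully faithful on pullback-squares because the morphism space fibers above $T(\al)\times X^{\im\al}$ as a principal $\Lk$-bundle, and it admits local sections by the splitness of $\pi$ over small opens, hence is essentially surjective. By the criterion of \cite{Carchedi} (as invoked earlier in the text), this is an equivalence of topological groupoids, and the realization is unchanged.

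I expect the one mildly delicate step to be the identification of $c\colon T(\gamma_k)\to\QZ^{n-t}$ with multiplication by $p^k$, so that the fiber product in Proposition~\ref{prod} produces exactly $\Lk$ as the kernel of $\pi$; once this bookkeeping is in place, every remaining verification is formal and the proposition follows.
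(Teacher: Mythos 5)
Your first two paragraphs are fine and in substance reproduce the paper's (very short) proof: the proposition only asks that the larger space carry a groupoid structure over the same object space, and the action you obtain by pulling the $T(\al)$-action on $X^{\im\al}$ back along the projection $\pi\colon T(\gamma_k\times\al)\cong T(\gamma_k)\times_{\QZ^{n-t}}T(\al)\to T(\al)$ is exactly the paper's action $[l,g,t]x=\al(l)gx=gx$, since $\al(l)\in\im\al$ fixes $X^{\im\al}$. (Two small points: the identification $T(\gamma_k)\cong\QZ^{n-t}$ with cokernel map $\times p^k$ is the example preceding Example \ref{presentation}, not \ref{presentation} itself; and the composition and identities of the extended groupoid are those of the action groupoid for the pulled-back action, i.e.\ multiplication in $T(\gamma_k\times\al)$ itself --- they are not literally obtained by precomposing the structure maps of $\Twist_{n-t}^G(X)$ with $\pi\times\pi$, which would land in the smaller morphism space.)

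The third paragraph, however, contains a genuine error, and it is precisely the step you call ``the only piece of real content.'' The extended groupoid is \emph{not} equivalent to $\Twist_{n-t}^G(X)$, and its realization is \emph{not} the same space. Your proposed map (identity on objects, $\pi$ on morphisms) fails to be fully faithful in the sense used in the paper: full faithfulness requires the morphism space $H_1=\Coprod{[\al]}T(\gamma_k\times\al)\times X^{\im\al}$ to be the pullback $G_1\times_{G_0\times G_0}(H_0\times H_0)$, and since the object spaces are unchanged this pullback is just $G_1=\Coprod{[\al]}T(\al)\times X^{\im\al}$, whereas $H_1\to G_1$ is a principal $\Lk$-bundle (each object acquires the extra automorphisms $[l,e,0]$, $l\in\Lk$, acting trivially). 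Being an $\Lk$-bundle is evidence \emph{against} full faithfulness, not for it. Indeed the whole purpose of the extension in the paper is that the realization changes: it becomes $BT(\gamma_k)\times_{B\QZ^{n-t}}\Twist_{n-t}^G(X)$, a bundle over $\Twist_{n-t}^G(X)$ with fiber $B\Lk$, which serves as the domain of the topological part of the character map. For a concrete check take $G=e$, $X=*$: the extended groupoid realizes to $BT(\gamma_k)\simeq B\QZ^{n-t}$ mapping to $\Twist_{n-t}(*)=B\QZ^{n-t}$ by $B(\times p^k)$, whose homotopy fiber is $B\Lk$, so it is not an equivalence. Dropping the equivalence claim leaves a correct proof of the proposition as stated; retaining it would contradict the construction's intended use later in the paper.
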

\begin{proof}
Let $l \in \Lk$. We define the action to be
\[
[l,g,t]x = \al(l)gx = gx \in X^{\im \al}.
\]
\end{proof}

By Prop \ref{prod} we have that
\begin{align*}
\Coprod{[\al] \in \hom(\Z_{p}^{n-t},G)/\sim} T(\gamma_k\times \al)\times X^{\im \al}
&\cong \Coprod{[\al] \in \hom(\Z_{p}^{n-t},G)/\sim} T(\gamma_k)\times_{\QZ^{n-t}} T(\al)\times X^{\im \al} \\
&\cong T(\gamma_k) \times_{\QZ^{n-t}} \Coprod{[\al] \in \hom(\Z_{p}^{n-t},G)/\sim}T(\al)\times X^{\im \al}.
\end{align*}
It is easiest to think about this final term as being the morphisms of a topological groupoid over the object space
\[
*\times_* \Coprod{[\al] \in \hom(\Z_{p}^{n-t},G)/\sim} X^{\im \al}
\]
by collapsing $T(\gamma_k)$ and $\QZ^{n-t}$ to a point. This implies that the classifying space of the topological groupoid
\[
\xymatrix{\Coprod{[\al] \in \hom(\Z_{p}^{n-t},G)/\sim} T(\gamma_k\times \al)\times X^{\im \al} \ar[d]_{d} \ar@<1ex>[d]^{c} \\ \Coprod{[\al] \in \hom(\Z_{p}^{n-t},G)/\sim} X^{\im \al}}
\]
is
\[
BT(\gamma_k) \times_{B\QZ^{n-t}} \Coprod{[\al] \in \hom(\Z_{p}^{n-t},G)/\sim}ET(\al)\times_{T(\al)} X^{\im \al}
\]
or
\[
BT(\gamma_k) \times_{B\QZ^{n-t}} \Twist_{n-t}^G(X).
\]
The pullback is in fact a homotopy pullback because $T(\gamma_k) \lra{} \QZ^{n-t}$ is surjective and so gives rise to a fibration when $B(-)$ is applied.

The topological part of the character map
\[
BT(\gamma_k) \times_{B\QZ^{n-t}} \Twist_{n-t}^G(X) \lra{} EG\times_G X
\]
is constructed by realizing a map of topological groupoids
\[
\xymatrix{\Coprod{[\al] \in \hom(\Z_{p}^{n-t},G)/\sim} T(\gamma_k\times \al)\times X^{\im \al} \ar[d]_{d} \ar@<1ex>[d]^{c} \ar[r]^(.75){f_{mor}} & G \times X \ar[d]_{d} \ar@<1ex>[d]^{c} \\ \Coprod{[\al] \in \hom(\Z_{p}^{n-t},G)/\sim} X^{\im \al} \ar[r]^(.65){f_{ob}} & X.
 }
\]
We will define the map on objects $f_{ob}$ and the map on morphisms $f_{mor}$. The map $f_{ob}$ is defined as the coproduct of the inclusions
\[
X^{\im \al} \hookrightarrow X.
\]
The map $f_{mor}$ is a bit more complicated. For 
\[
([l,g,t],x) \in T(\gamma_k\times \al)\times X^{\im \al},
\]
we define
\[
f_{mor}:([l,g,t],x) \mapsto (g\al(l)^{-1},x).
\]
Note that the order switches and $\al(l)$ is inverted. We show this is well-defined. For $i \in \Z_{p}^{n-t}$,
\begin{eqnarray*}
(g\al(l)^{-1},x) = f_{mor}([l,g,t+i],x) = f_{mor}([l+\gamma_k(i),g\al(i),t],x) \\ = ([g\al(i)\al(\gamma_k(i))^{-1}\al(l)^{-1}],x) = ([g\al(l)^{-1}],x), 
\end{eqnarray*}
using the fact that $\al(\gamma_k(i)) = \al(i)$.
It is clear that the square made up of $f_{ob}$, $f_{mor}$, and the domains (projections) commutes. We show that the diagram involving the codomain maps commutes:
\[
\xymatrix{([l,g,t],x) \ar[r] \ar[d] & (g\al(l)^{-1},x) \ar[d] \\ \al(l)gx = gx \ar[r] & g\al(l)^{-1}x = gx.}
\]

As an example of the above construction we compute the topological map from $X = *$ and $G = \Zp{k}$.
\begin{example} \label{salpha}
We compute
\[
BT(\gamma_k)\times_{B
\QZ^{n-t}} \Twist_{n-t}^{\Zp{k}}(*) \lra{} B\Zp{k}.
\]
Because $\Zp{k}$ is abelian each map $\al:\Z_{p}^{n-t} \lra{} \Zp{k}$ is its own conjugacy class. Thus on objects we get
\[
\Coprod{\al \in \hom(\Z_{p}^{n-t},\Zp{k})} * \lra{} *.
\]
Fix an $\al$ as above. On morphisms in the path component corresponding to $\al$ we have the subtraction group homomorphism
\[
s_{\al}:T(\gamma_k \times \al) = (\Lk \oplus \Zp{k}) \oplus_{\Z_{p}^{n-t}}\Q_{p}^{n-t}  \lra{} \Zp{k},
\] 
which sends
\[
s_{\al}: (l,g,t) \mapsto g - \al(l).
\]
When $\al$ is the zero map this is just the projection on to $\Zp{k}$. Put all together on morphisms we get the disjoint union over the maps $\al$ of the subtractions maps
\[
\Coprod{\al \in \hom(\Z_{p}^{n-t},\Zp{k})}s_{\al}.
\]
The realization of this map is thus
\[
\Coprod{\al \in \hom(\Z_{p}^{n-t},\Zp{k})}Bs_{\al}: \Coprod{\al \in \hom(\Z_{p}^{n-t},\Zp{k})}BT(\gamma_k \times \al) \lra{} B\Zp{k},
\]
or written another way a map
\[
BT(\gamma_k)\times_{B\QZ^{n-t}}\Coprod{\al \in \hom(\Z_{p}^{n-t},\Zp{k})}BT(\al) \lra{} B\Zp{k}.
\]
\end{example}

\subsection{The Algebraic Map}
The algebraic part of the twisted character map uses the properties of the ring $B_t$ constructed and described in Section \ref{Bt} to construct an appropriate codomain for the twisted character map.

The discussion regarding gradings in \cite{tgcm} carries over to this situation.

Applying $\E$ to the topological map constructed in the last section we get 
\[
\E^0(EG\times_G X) \lra{} \E^0(BT(\gamma_k) \times_{B\QZ^{n-t}} \Twist_{n-t}^G(X)). 
\]
We begin with an algebraic manipulation of the codomain of the topological part of the character map.
\begin{prop} \label{kunneth}
There is an isomorphism
\[
\E^0(BT(\gamma_k) \times_{B\QZ^{n-t}} \Twist_{n-t}^G(X)) \cong \E^0(BT(\gamma_k)) \otimes_{\E^0(B\QZ^{n-t})} \E^0(\Twist_{n-t}^G(X)).
\]
\end{prop}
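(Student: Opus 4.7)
The plan is to realize this as a Künneth / base-change statement, powered by the fact that $\E^0(BT(\gamma_k))$ is a finitely generated free module over $\E^0(B\QZ^{n-t})$.

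First, I would identify the map $BT(\gamma_k)\to B\QZ^{n-t}$ explicitly. Since $T(\gamma_k)=\Lk\oplus_{\Z_p^{n-t}}\Q_p^{n-t}\cong \QZ^{n-t}$, with the structure map to $\QZ^{n-t}$ given by multiplication by $p^k$, this map is, after delooping, the $p^k$-power self-map of $B\QZ^{n-t}$. Using the fixed coordinate $x$ and the identification $\E^*(B\QZ^h)\cong \E^*(B(S^1)^h)\cong \E^*\powser{x_1,\ldots,x_h}$ recorded in Section 3.1, the induced map on $\E^0$ is
\[
\E^0\powser{\bq}\lra{q_i\mapsto [p^k](A_i)} \E^0\powser{\bA}.
\]
By the Weierstrass preparation argument used in the construction of $\Bprime$, each $[p^k](A_i)$ is a unit times a polynomial of degree $p^{kn}$ in $A_i$, so $\E^0(BT(\gamma_k))$ is a finite free module of rank $p^{kn(n-t)}$ over $\E^0(B\QZ^{n-t})$.

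Next, the set-theoretic pullback is a homotopy pullback, because $BT(\gamma_k)\to B\QZ^{n-t}$ is the delooping of a surjective map of topological groups and is therefore a fibration (as already remarked just before the construction of the topological map). I would then invoke the Eilenberg--Moore spectral sequence
\[
E_2^{s,t}=\Tor^{s,t}_{\E^*(B\QZ^{n-t})}\bigl(\E^*(BT(\gamma_k)),\,\E^*(\Twist_{n-t}^G(X))\bigr)\Rightarrow \E^{s+t}\bigl(BT(\gamma_k)\times_{B\QZ^{n-t}}\Twist_{n-t}^G(X)\bigr).
\]
The flatness (in fact freeness of finite rank) established in the previous step kills $\Tor^{s,t}$ for $s>0$, so the spectral sequence collapses at $E_2$ and the edge homomorphism gives the asserted isomorphism.

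The main obstacle is justifying convergence of the Eilenberg--Moore spectral sequence, since $B\QZ^{n-t}$ is not simply connected in the ordinary sense. However, as far as $\E$-cohomology is concerned, $B\QZ^{n-t}$ is indistinguishable from the simply connected space $B(S^1)^{n-t}$ (the case $n=0$ or $t=0$ being degenerate, where both sides collapse to $\E^*$), so the usual hypotheses for EM convergence are effectively met. If one wants to avoid this point entirely, an alternative is to fix a free basis $e_1,\ldots,e_N$ of $\E^0(BT(\gamma_k))$ over $\E^0(B\QZ^{n-t})$ coming from Weierstrass preparation, pull these classes back along the projection onto $BT(\gamma_k)$, and use the Serre spectral sequence for the fibration $B\Lk\to BT(\gamma_k)\times_{B\QZ^{n-t}}\Twist_{n-t}^G(X)\to \Twist_{n-t}^G(X)$—whose fiber $B\Lk$ has free $\E^0$-cohomology—to verify directly that these pulled-back classes form an $\E^0(\Twist_{n-t}^G(X))$-basis of $\E^0$ of the total space.
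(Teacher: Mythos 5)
Your key algebraic input---that $\E^0(BT(\gamma_k))$ is a finitely generated free $\E^0(B\QZ^{n-t})$-module via $q_i\mapsto[p^k](A_i)$ and Weierstrass preparation---is exactly the input the paper relies on, and your remark that the strict pullback is a homotopy pullback is also needed. The genuine gap is the convergence of the Eilenberg--Moore spectral sequence. For a generalized cohomology theory such as $\E$, the EMSS comes from the Tot-tower of a cosimplicial space, and both its convergence and the identification of the $E_2$-page with $\mathrm{Tor}$ over $\E^*(B\QZ^{n-t})$ (which already requires K\"unneth isomorphisms, with completed tensor products over $\E^*$, for the cobar terms) are not consequences of hypotheses on $\E^*$ of the base alone. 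Saying that $B\QZ^{n-t}$ has the same $\E$-cohomology as $B(S^1)^{n-t}$ does not put you in the simply connected situation: the standard hypotheses (simple connectivity or nilpotent $\pi_1$-action, finiteness, pro-constancy of derived limits) concern the homotopy type of the diagram, and $B\QZ^{n-t}$ is a $K(\QZ^{n-t},1)$ receiving a map of $K(\pi,1)$'s; you cannot replace it by $B(S^1)^{n-t}$ without changing the pullback. Your fallback---pulling back a Weierstrass basis and checking it is a basis via the Serre spectral sequence of $B\Lk\to \text{total}\to\Twist_{n-t}^G(X)$---is essentially a Leray--Hirsch argument, but as stated it inherits the same difficulties: the base $\Twist_{n-t}^G(X)$ is an infinite-dimensional, non-simply-connected union of Borel constructions, so local coefficients and convergence/$\lim^1$ issues for the $\E$-based Atiyah--Hirzebruch--Serre spectral sequence must be addressed, and ``verify directly'' is precisely where the work lies.

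For comparison, the paper avoids spectral sequences altogether. Using Remark \ref{T-action} it views $\Twist_{n-t}^G(X)$ as $E\mathbb{T}\times_{\mathbb{T}}Y$ for $\mathbb{T}=(S^1)^{\times n-t}$ and compares the two functors of the $\mathbb{T}$-space $Y$, namely $Y\mapsto\E^0\big(BT(\gamma_k)\times_{B\mathbb{T}}E\mathbb{T}\times_{\mathbb{T}}Y\big)$ and $Y\mapsto\E^0(BT(\gamma_k))\otimes_{\E^0(B\mathbb{T})}\E^0(E\mathbb{T}\times_{\mathbb{T}}Y)$: both are cohomology theories in $Y$ (pullback along a fibration commutes with homotopy colimits; tensoring with a finitely generated free module is exact and commutes with products), there is a natural comparison map, and one checks it is an isomorphism on orbits $\mathbb{T}/A$, one circle factor at a time. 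If you want to keep your outline, the cleanest repair is to recast your Leray--Hirsch step in this ``two cohomology theories agreeing on orbits'' form rather than appealing to the Eilenberg--Moore or Serre spectral sequences.
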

\begin{proof}
Let us assume that $n>0$ and let $\mathbb{T} = (S^{1})^{\times n-t}$. The result follows from the fact that the functors on $\mathbb{T}$-spaces (with no finiteness hypotheses)
\[
\E^0(BT(\gamma_k) \times_{B\mathbb{T}} E\mathbb{T}\times_{\mathbb{T}} Y)
\]
and
\[
\E^0(BT(\gamma_k)) \otimes_{\E^0(B\mathbb{T})} \E^0(E\mathbb{T}\times_{\mathbb{T}} Y)
\]
are both cohomology theories. This includes the spaces in the proposition by Remark \ref{T-action}. The first functor is a cohomology theory in $Y$ because it is the pullback along a fibration and this pullback commutes with all homotopy colimits by \cite{hocolims}. The second is a cohomology theory in $Y$ because it is extension by a finitely generated free module. There is a natural map from the tensor product to the other functor. It suffices to check that these cohomology theories agree on $\mathbb{T}/A$ for any closed subgroup $A \subseteq \mathbb{T}$. Now as $\mathbb{T}/A \cong S^1/A_1 \times \ldots \times S^1/A_{n-t}$ it suffices to check these one at a time and here the isomorphism is clear. 
\begin{comment}
In the next section we prove that $\Twist(-)$ is a homotopically well-behaved functor. In the previous section we observed that the pullback of spaces is in fact a homotopy pullback. These facts together imply that the left hand side is a cohomology theory. Due to the fact that $\E^0(BT(\gamma_k))$ is a finitely generated flat $\E^0(B\QZ^{n-t})$-module the right hand side is a cohomology theory. Now we see that when  and $\Twist$ is well behave homotopically this follows from the  the Eilenberg-Moore spectral sequence collapses and the isomorphism above is obtained.
\end{comment}
\end{proof}

\begin{example} \label{Es}
Let $s_\al$ be as in Example \ref{salpha} in the previous section. We compute $\E^0(Bs_{\al})$. We begin by computing $\E^0(BT(\al))$. For a complete $\E^0$-algebra $R$ we have the correspondence
\[
\hom_{\text{cont } \E^0}(\E^0(B\QZ^{n-t}),R) \cong \hom_{ab}(\Z_{p}^{n-t},\G_{\E}(R)),
\]
where $\Z_{p}^{n-t} \cong (\QZ^{n-t})^*$. This carries over to $T(\al)$.
\[
\hom_{\text{cont } \E^0}(\E^0(BT(\al)),R) \cong \hom_{ab}(T(\al)^*,\G_{\E}(R)).
\]
A presentation for $T(\al)^*$ as a $\Z_p$-module was given in Example \ref{presentation} and this gives a description of $\E^0(BT(\al))$ as an $\E^0(B\QZ^{n-t})$-algebra. Using the coordinate and the presentation we have
\[
\E^0(BT(\al)) \cong \E^0\powser{q_1,\ldots,q_{n-t},x}/([p^k](x) - ([i_1](q_1) +_{\G_{\E}} \ldots +_{\G_{\E}} [i_{n-t}](q_{n-t}))).
\]
Recall that 
\[
\E^0(BT(\gamma_k)) \cong \E^0\powser{\bq,\bA}/([p^k]\bA - \bq).
\]
The map $s_{\al}$ factors through $T(\al \times \al)$:
\[
\xymatrix{T(\gamma_k\times\al) \ar[r]^{s_{\al}} \ar[d] & \Zp{k} \\ T(\al \times \al) \ar[ru]^s &}.
\]
Now dualizing $s$ gives the map (in the notation of Example \ref{presentation})
\[
(\Zp{k})^* \lra{} T(\al \times \al)^* = \{b_1,\ldots,b_{n-t},x,y|p^kx = p^ky = i_1b_1 + \ldots i_{n-t} b_{n-t}\},
\]
defined by
\[
1 \mapsto y-x.
\]
We get the following explicit description of $\E^0(Bs_\al)$
\[
\E^0(Bs_{\al}): x \mapsto (x -_{\G_{\E}} ([i_1]A_1+_{\G_{\E}} \ldots +_{\G_{\E}} [i_{n-t}]A_{n-t})).
\]
\end{example}

The algebraic part of the twisted character map will be assembled from two maps: the canonical map of spectra 
\[
L:\E \lra{} \LE
\]
and the canonical flat map of rings
\[
i_{t}:\E^*(BT(\gamma_k)) \lra{} B_{t}^{*}.
\]

The algebraic part of the twisted character map is
\begin{align*}
\E^*(BT(\gamma_k)) \otimes_{\E^*(B\QZ^{n-t})} \E^*(\Twist_{n-t}^G(X)) &  \\
\lra{}  B_{t}^{*}\otimes_{\LE^*(B\QZ^{n-t})} & \LE^*(\Twist_{n-t}^G(X)).
\end{align*}
%should be able to simplify the formulas in the previous subsection by writing
%twist for the big disjoint union
It is the tensor product of the maps $i_t$ and $L(\Twist_{n-t}^{G}(X))$ over $L(B\QZ^{n-t})$. 

This map composed with the topological map of the previous section gives the twisted character map
\[
\Upsilon_G:\E^*(EG \times_G X) \lra{} B_{t}^{*}\otimes_{\LE^*(B\QZ^{n-t})} \LE^*(\Twist_{n-t}^{G}(X)).
\]

When we want to specify a space $X$ in the twisted character map we will write $\Upsilon_G(X)$ for the map above. We will use a shorthand for the codomain of the twisted character map. Let
\[
B_{t}^*(\Twist_{n-t}^{G}(X)) = B_{t}^{*}\otimes_{\LE^*(B\QZ^{n-t})} \LE^*(\Twist_{n-t}^{G}(X)).
\]

\begin{prop}
The map $\Upsilon_G$ is independent of the choice of $k$.
\end{prop}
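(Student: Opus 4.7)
I plan to prove this by comparing the constructions for two integers $k \leq k'$ that both satisfy the hypothesis that every map $\Z_{p}^{n-t} \to G$ factors through $\Lk$, respectively $\Lambda_{k'}$. The quotient $\Lambda_{k'} \lra{} \Lk$ is compatible with $\gamma_k$ and $\gamma_{k'}$, and so induces a surjection of short exact sequences giving a map $T(\gamma_{k'}) \lra{} T(\gamma_k)$ commuting with the projections onto $\QZ^{n-t}$, and hence a map $BT(\gamma_{k'}) \lra{} BT(\gamma_k)$ over $B\QZ^{n-t}$.

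The first step is topological: I would verify that the two topological maps $BT(\gamma_k)\times_{B\QZ^{n-t}}\Twist_{n-t}^G(X) \lra{} EG\times_G X$ and $BT(\gamma_{k'})\times_{B\QZ^{n-t}}\Twist_{n-t}^G(X) \lra{} EG\times_G X$ fit into a commutative triangle with $BT(\gamma_{k'})\times_{B\QZ^{n-t}}\Twist_{n-t}^G(X) \lra{} BT(\gamma_k)\times_{B\QZ^{n-t}}\Twist_{n-t}^G(X)$ on the left edge. Using the explicit formula $f_{mor}([l,g,t],x) \mapsto (g\al(l)^{-1},x)$ this is immediate: if $l \in \Lambda_{k'}$ has image $\bar{l} \in \Lk$, then $\al(l) = \al(\bar{l})$ because every $\al$ factors through $\Lk$. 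Applying $\E^*$ and invoking Proposition \ref{kunneth} then gives a commutative triangle of the form
\[
\xymatrix{ & \E^*(BT(\gamma_k))\otimes_{\E^*(B\QZ^{n-t})}\E^*(\Twist_{n-t}^G(X)) \ar[dd] \\ \E^*(EG\times_G X) \ar[ru] \ar[rd] & \\ & \E^*(BT(\gamma_{k'}))\otimes_{\E^*(B\QZ^{n-t})}\E^*(\Twist_{n-t}^G(X)).}
\]

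The second step is algebraic: I would show that the canonical maps $i_{t,k}: \E^*(BT(\gamma_k)) \lra{} B_t^*$ and $i_{t,k'}: \E^*(BT(\gamma_{k'})) \lra{} B_t^*$ are compatible with the vertical map in the triangle above. This is the content of the colimit definition of $B_t'$: recall that
\[
B_t' \cong \Colim{k}\text{ } \LE^0(B\QZ^{n-t})\otimes^{p^k}_{\E^0(B\QZ^{n-t})}\E^0(B\QZ^{n-t}),
\]
where the right tensor factor is precisely $\E^0(BT(\gamma_k)) \cong \E^0\powser{\bq,\bA}/([p^k]\bA - \bq)$ and the structure maps of the colimit are induced by the square appearing earlier in Section \ref{Bt}, which is dual to the map $T(\gamma_{k'}) \lra{} T(\gamma_k)$. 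Thus the $i_{t,k}$ are exactly the colimit-cone maps, and the desired compatibility $i_{t,k'} \circ (T(\gamma_{k'}) \to T(\gamma_k))^* = i_{t,k}$ is automatic.

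The third step is to assemble these two compatibilities: tensoring with $B_t^*$ over $\LE^*(B\QZ^{n-t})$ and taking $\LE^*$ of $\Twist_{n-t}^G(X)$ preserves the triangle, so the two character maps $\Upsilon_G$ constructed from $k$ and $k'$ coincide as maps into $B_t^*\otimes_{\LE^*(B\QZ^{n-t})}\LE^*(\Twist_{n-t}^G(X))$. The main obstacle is verifying the algebraic compatibility of Step 2, i.e.\ checking that the structure map of the colimit defining $B_t'$ is literally the dual of the group homomorphism $T(\gamma_{k'}) \lra{} T(\gamma_k)$; once that identification is made the rest is bookkeeping.
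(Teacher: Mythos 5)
Your argument is correct and is essentially the argument the paper has in mind: the paper simply cites the proof of Proposition 3.13 of \cite{tgcm}, which is exactly this kind of stabilization-in-$k$ check, and your version adapts it to the twisted setting by comparing $k\le k'$ through the quotient $\Lambda_{k'}\to\Lambda_k$, the induced map $T(\gamma_{k'})\to T(\gamma_k)$ over $\QZ^{n-t}$, and the identification of the colimit structure maps of $B_t'$ with the duals of these group homomorphisms. The only point to keep in mind is that $B_t$ is obtained from the colimit by localization and completion, but the colimit-cone maps remain compatible after these operations, so your Step 2 goes through as stated.
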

\begin{proof}
This follows from the proof of Proposition 3.13 in \cite{tgcm}.
\end{proof}

\begin{thm}
The map on global sections of the $p^k$-torsion of \pdiv groups
\[
\xymatrix{\G_0\oplus_{\Z_{p}^{n-t}}\Q_{p}^{n-t}[p^k] \ar[r] \ar[d] & \G_{\E}[p^k] \ar[d] \\
\Spf_{I_t + (\bq)}(B_t) \ar[r] & \Spec(\E^0)}
\]
is recovered by $\Upsilon_{\Zp{k}}(*)$.
\end{thm}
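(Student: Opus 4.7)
The plan is to make both sides of $\Upsilon_{\Zp{k}}(*)$ explicit and observe that the formula for the map literally agrees with the formula derived earlier for the $p^k$-torsion map. The only real work is unwinding the definitions; all the key computations are already collected in Examples \ref{salpha} and \ref{Es} and in the discussion of $p^k$-torsion preceding Proposition 2.5.

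First I would identify the domain: by the standard orientation of $B\Zp{k}$ through the fixed coordinate,
\[
\E^0(B\Zp{k}) \cong \E^0\powser{x}/[p^k](x) \cong \Sect_{\G_{\E}[p^k]}.
\]
Next I would identify the codomain. Since $\Zp{k}$ is abelian, every $\al:\Z_p^{n-t} \to \Zp{k}$ is its own conjugacy class and is determined by the tuple $i = (i_1,\ldots,i_{n-t}) \in \Lk$, so
\[
\Twist_{n-t}^{\Zp{k}}(*) \;=\; \Coprod{i \in \Lk} BT(\al_i).
\]
Using the presentation of $T(\al_i)^*$ from Example \ref{presentation}, together with Example \ref{Es}, and passing from $\E$ to $\LE$ (replacing $\G_{\E}$ by $\G_0$, which is allowed by Lemma 2.16 of \cite{tgcm}), one gets
\[
\LE^0(BT(\al_i)) \;\cong\; L_t\powser{\bq,x_i}/\bigl([p^k](x_i) - [i](\bq)\bigr).
\]
Tensoring with $B_t$ over $\LE^0(B\QZ^{n-t}) = L_t\powser{\bq}$ then yields
\[
B_t^0(\Twist_{n-t}^{\Zp{k}}(*)) \;\cong\; \Prod{i \in \Lk} B_t\powser{x_i}/\bigl([p^k](x_i) - [i](\bq)\bigr),
\]
which is precisely $B_t \otimes_{\Bprime} \Sect_{\G_0 \oplus_{\Z_p^{n-t}} \Q_p^{n-t}[p^k]}$ as described in Section \ref{Bt}.

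With both rings identified, I would compute the map. The topological map factors through the subtraction maps $s_{\al_i}$ of Example \ref{salpha}, so $\E^0$ of the topological map is the product over $i \in \Lk$ of the maps $\E^0(Bs_{\al_i})$, which by Example \ref{Es} send
\[
x \;\longmapsto\; x_i \,-_{\G_{\E}}\, \bigl([i_1]A_1 +_{\G_{\E}} \ldots +_{\G_{\E}} [i_{n-t}]A_{n-t}\bigr) \;=\; x_i -_{\G_{\E}} [i](\bA).
\]
Tensoring with $B_t$ via the algebraic map $i_t$ interprets the $A_j$ as the canonical elements of $\G(B_t)$ with $[p^k]A_j = q_j$, and this is exactly the formula given for the $p^k$-torsion map
\[
\E^0\powser{x}/[p^k](x) \;\longrightarrow\; \Prod{i \in \Lk} B_t\powser{x_i}/([p^k](x_i) - [i](\bq))
\]
in Section \ref{Bt}.

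The only technical point to watch is the compatibility of the two formal group laws appearing: the coordinate $x$ is a coordinate both for $\G_{\E}$ over $\E^0$ and, after base change, for $\G_0$ over $B_t$, and Lemma 2.16 of \cite{tgcm} ensures that the ideals cutting out the torsion are the same whether formed using $\G_{\E}$ or $\G_0$. Once this is noted, matching the two descriptions is a direct comparison of formulas.
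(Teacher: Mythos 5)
Your proposal is correct and follows essentially the same route as the paper's own proof: both unwind the topological part via the subtraction maps of Example \ref{salpha}, use the computation of $\E^0(Bs_{\al})$ from Example \ref{Es} to get $x \mapsto x_i -_{\G_{\E}} [i](\bA)$, and then apply the algebraic base change to $B_t$ and $\LE^0$ to match the explicit description of the $p^k$-torsion map in Section \ref{Bt}. Your version simply spells out the identifications of the domain and codomain rings (and the ideal comparison via Lemma 2.16 of \cite{tgcm}) a bit more explicitly than the paper does.
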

\begin{proof}
We will use the results of Section \ref{Bt} and the examples of the previous sections. By Example \ref{salpha} the topological part of the map gives
\[
\Prod{\al \in \hom(\Z_{p}^{n-t},\Zp{k})}\E^0(Bs_{\al}): \E^0(B\Zp{k}) \lra{} \Prod{\al \in \hom(\Z_{p}^{n-t},\Zp{k})} \E^0(BT(\gamma_k \times \al)).
\]
By Example \ref{Es} this map sends
\[
x \mapsto \Prod{i\in \Lk} (x -_{\G_{\E}} [i](\bA)).
\]
Now composing with the map to $B_t$ on the left and the map to $\LE^0(BT(\al))$ on the right sends $\bA \in \E^0(BT(\gamma_k))$ to $\bA \in B_t$ and $x$ and $\bq \in \E^0(BT(\al))$ to 
$x$ and $\bq \in \LE^0(BT(\al))$. All together this gives the map described in Section \ref{Bt}: the global sections of the $p^k$-torsion of the map of \pdiv groups above.
\end{proof}

\subsection{The Isomorphism}
In this section we prove by reduction to the case of $G$ finite abelian and $X = *$ that for any finite $G$ and finite $G$-space $X$ there is an isomorphism
\[
B_t \otimes_{\E^0} \Upsilon_G:B_t \otimes_{\E^0} \E^*(EG \times_G X) \lra{\cong} B_{t}^{*}\otimes_{\LE^*(B\QZ^{n-t})} \LE^*(\Twist^{G}_{n-t}(X)).
\]

The proof here follows the same lines as that in \cite{tgcm}. Because of this we will only point out how to prove the essential properties of the $\Twist_{n-t}(-)$ functor that allow for the reduction to cyclic $p$-groups where it differs from the proof in \cite{tgcm}.

\begin{prop}
The functor $\Twist_{n-t}(-)$ commutes with pushouts of finite $G$-CW complexes.
\end{prop}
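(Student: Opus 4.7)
The plan is to reduce the statement to three known preservation facts, one for each layer in the definition
\[
\Twist_{n-t}(X) = \Coprod{[\al]} ET(\al)\times_{T(\al)} X^{\im \al},
\]
and then assemble them. Suppose we are given a pushout
\[
\xymatrix{A \ar[r] \ar[d] & X \ar[d] \\ Y \ar[r] & X\cup_A Y}
\]
of finite $G$-CW complexes, where we may assume $A \lra{} Y$ is an inclusion of a $G$-subcomplex (this is the standard meaning of pushout for CW complexes; the general case follows by CW approximation).

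First I would observe that for each continuous homomorphism $\al:\Z_p^{n-t} \lra{} G$, the fixed-point functor $(-)^{\im \al}$ applied to the square produces a pushout square of $C(\im \al)$-spaces
\[
(X\cup_A Y)^{\im \al} \cong X^{\im \al}\cup_{A^{\im \al}} Y^{\im \al}.
\]
This is the standard fact that taking $H$-fixed points of a $G$-CW pushout along a subcomplex inclusion gives a pushout of $H$-CW complexes, applied to $H = \im \al$; the $C(\im \al)$-action on the fixed subspaces is inherited from the $G$-action on the pieces. By Proposition \ref{actionextends} this extends to a pushout of $T(\al)$-spaces.

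Next I would apply the Borel construction $ET(\al)\times_{T(\al)}(-)$ to this pushout of $T(\al)$-spaces. The functor $ET(\al)\times (-)$ preserves all colimits in the category of $T(\al)$-spaces (it is the product with a fixed space, computed in $T(\al)$-spaces), and quotienting by the $T(\al)$-action is a left adjoint and therefore also preserves colimits; moreover the map $A^{\im \al} \lra{} Y^{\im \al}$ is still a cofibration, so the strict pushout agrees with the homotopy pushout on both sides. Hence
\[
ET(\al)\times_{T(\al)}(X\cup_A Y)^{\im \al} \simeq \bigl(ET(\al)\times_{T(\al)}X^{\im \al}\bigr)\cup_{ET(\al)\times_{T(\al)}A^{\im \al}}\bigl(ET(\al)\times_{T(\al)}Y^{\im \al}\bigr).
\]

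Finally, coproducts commute with pushouts, so taking the disjoint union over conjugacy classes $[\al]\in \hom(\Z_p^{n-t},G)/\sim$ yields the desired equivalence
\[
\Twist_{n-t}(X\cup_A Y) \simeq \Twist_{n-t}(X)\cup_{\Twist_{n-t}(A)} \Twist_{n-t}(Y).
\]
The only point requiring any care is the first step, since one must know that the map $A^{\im \al} \lra{} Y^{\im \al}$ is a cofibration (so that both the pushout in Step 1 and the Borel construction pushout in Step 2 are homotopy invariant); this is automatic here because $A$ is a $G$-subcomplex of $Y$, making $A^{\im \al}$ a subcomplex of $Y^{\im \al}$ in the induced $C(\im \al)$-CW structure.
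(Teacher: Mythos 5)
Your argument is correct, and it is worth noting that it is not quite the route the paper takes. The paper's proof is a deferral to the corresponding statement for $\Fix_{n-t}(-)$ in \cite{tgcm}, and the only new point it isolates is the one genuine novelty of the twisted setting: the Borel constructions now involve the infinite discrete groups $T(\al)$. The paper handles this by exhibiting $T(\al)$ as a filtered colimit of finite groups $T(\al)_i$ (pushouts of $C(\im \al)$ along $\times p^i$), so that ``everything in sight is a colimit'' and the finite-group argument of \cite{tgcm} applies after commuting colimits. You instead argue directly: $(-)^{\im \al}$ sends a $G$-CW pushout along a subcomplex inclusion to a pushout (with the $T(\al)$-action supplied by Proposition \ref{actionextends}), the functor $ET(\al)\times_{T(\al)}(-)$ preserves colimits outright (product with a fixed space followed by the orbit functor, a left adjoint) and preserves the cofibration $A^{\im \al} \to Y^{\im \al}$, so the strict pushout is also a homotopy pushout, and finally coproducts over conjugacy classes commute with pushouts. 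Your version is self-contained and makes explicit exactly where the cofibration hypothesis enters, and it shows the finite approximation of $T(\al)$ is not actually needed for this particular statement; the paper's device has the separate virtue of reducing everything to the finite-group case already treated in \cite{tgcm} (and the presentation $T(\al) \cong \Colim{i}\, T(\al)_i$ is independently useful, e.g.\ for viewing $BT(\al)$ as a filtered colimit of classifying spaces of finite groups). The one point to keep stated explicitly, as you do, is that ``pushout of finite $G$-CW complexes'' means pushout along a $G$-subcomplex inclusion, since $(-)^{\im \al}$ is a right adjoint and does not preserve arbitrary pushouts.
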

\begin{proof}
This is similar to the proof in \cite{tgcm}. The main difference is that the Borel construction in the definition of $\Twist_{n-t}(-)$ has an infinite group. The main point of the proof is that everything in sight is a colimit and the groups of the form $T(\al)$ for $\al:\Z_{p}^{n-t} \lra{} G$ are no exception. 
Let $T(\al)_i$ be the pushout in the sense of Proposition \ref{pushout}
\[
\xymatrix{\Z_{p}^{n-t} \ar[r]^{\times p^i} \ar[d]_{\al} & \Z_{p}^{n-t} \ar[d] \\ C(\im \al) \ar[r] & T(\al)_i. }
\]
The group $T(\al)_i$ is finite and
\[
T(\al) \cong \Colim{i} \text{ }T(\al)_i.
\]
\end{proof}

\begin{prop}
The functor $\Twist_{n-t}(-)$ commutes with geometric realization of simplicial finite $G$-CW complexes.
\end{prop}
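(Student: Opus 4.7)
The plan is to decompose $\Twist_{n-t}(-)$ as a composite of simpler functors and verify that each piece commutes with geometric realization of simplicial finite $G$-CW complexes. Recall
\[
\Twist_{n-t}(X) = \Coprod{[\al] \in \hom(\Z_{p}^{n-t},G)/\sim} ET(\al) \times_{T(\al)} X^{\im \al}.
\]
Since $G$ is finite, the indexing set is finite, so the outer coproduct plainly commutes with geometric realization. It therefore suffices to check, for each fixed $[\al]$, that the functor $X \mapsto ET(\al) \times_{T(\al)} X^{\im \al}$ commutes with realization.

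For the fixed-point functor $(-)^{\im \al}$, a simplicial finite $G$-CW complex $X_\bullet$ has the property that each $X_n^{\im \al}$ is a sub-CW-complex of $X_n$. Geometric realization preserves the $G$-CW structure, and taking $\im\al$-fixed points commutes with the finite limits (equalizers) used to define them provided the simplicial space is suitably cofibrant, which is the case for simplicial $G$-CW complexes. Thus $|X_\bullet|^{\im \al} \cong |X_\bullet^{\im \al}|$.

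For the Borel construction $ET(\al) \times_{T(\al)} (-)$, the main subtlety is that $T(\al)$ is infinite. Using the presentation from the previous proposition, $T(\al) = \Colim{i} T(\al)_i$ is a filtered colimit of finite groups, and we may correspondingly write $ET(\al) \simeq \Colim{i} ET(\al)_i$ compatibly. Hence
\[
ET(\al) \times_{T(\al)} Y \simeq \Colim{i} \bigl( ET(\al)_i \times_{T(\al)_i} Y \bigr),
\]
and because filtered colimits commute with geometric realization, we are reduced to the case of a finite group $K$. For finite $K$, the Borel construction $EK \times_K Y$ is the realization of the standard two-sided bar construction $B(\ast,K,Y)$, and a Fubini argument for realizations of bisimplicial spaces gives $EK \times_K |Y_\bullet| \simeq |EK \times_K Y_\bullet|$.

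The main obstacle is exactly the infinite group $T(\al)$; once that is handled by the filtered colimit reduction made available by the previous proposition, the remaining pieces are essentially formal. Combining the three pieces (finite coproduct, fixed points, Borel construction) yields the desired equivalence $\Twist_{n-t}(|X_\bullet|) \simeq |\Twist_{n-t}(X_\bullet)|$ for any simplicial finite $G$-CW complex $X_\bullet$.
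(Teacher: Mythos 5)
Your argument is correct, but it is genuinely more self-contained than what the paper does: the paper's entire proof is a deferral, stating that the corresponding result for $EG\times_G\Fix_{n-t}(-)$ in \cite{tgcm} extends to $\Twist_{n-t}(-)$ with ``no difficulties.'' You instead decompose $\Twist_{n-t}(-)$ into the finite coproduct over conjugacy classes, the fixed-point functors $(-)^{\im \al}$, and the Borel constructions $ET(\al)\times_{T(\al)}(-)$, and verify each piece separately; this is essentially the skeleton of the \cite{tgcm} argument, but you make explicit the one point that is actually new here, namely the infinite group $T(\al)$, which you handle via the presentation $T(\al)\cong \Colim{i}\,T(\al)_i$ as a filtered colimit of finite groups. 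Notably, the paper invokes that colimit presentation in the \emph{preceding} proposition (on pushouts) rather than this one, so your proof redeploys it where the paper's citation silently absorbs the issue. Two small remarks: first, the cofibrancy hedge in your fixed-point step is unnecessary, since geometric realization of simplicial spaces in compactly generated spaces preserves finite limits, and $(-)^{\im\al}$ is such a limit; second, the filtered-colimit reduction, while perfectly valid (e.g.\ take $ET(\al)$ itself as a model for each $ET(\al)_i$, so that $ET(\al)\times_{T(\al)}Y = \Colim{i}\, ET(\al)\times_{T(\al)_i}Y$), is a convenience rather than a necessity: the Borel construction for any topological group is built from a product with a fixed space and a colimit, both of which already commute with geometric realization, which is presumably why the paper regards the extension as routine. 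What your route buys is an explicit, checkable proof; what the paper's route buys is brevity at the cost of leaving the infinite-group point implicit.
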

\begin{proof}
This follows from \cite{tgcm}. There are no difficulties in extending the result there for the functor $\Fix_{n-t}(-)$ to $\Twist_{n-t}(-)$.
\end{proof}

Now we follow \cite{tgcm}. Using the Bousfield-Kan spectral sequence the two facts above allow us to reduce the isomorphism for transchromatic generalized character maps to the case of finite $G$-CW complexes with abelian stabilizers. Now Mayer-Vietoris reduces this to the case of $BA$ for $A$ a finite abelian group. It may not be entirely clear that the cohomology theory in the codomain of the twisted character map above has the Kunneth isomorphism that we need to reduce to cyclic p-groups. We prove that now.
\begin{prop}
Let $G \times H$ be a finite abelian group. Then
\begin{align*}
B_{t}^{*}(\Twist_{n-t}^{G\times H}(*)) &\cong \\
& B_{t}^{*}(\Twist_{n-t}^G(*)) \otimes_{B_{t}^{*}(\Twist_{n-t}^{e}(*))} B_{t}^{*}(\Twist_{n-t}^{H}(*)).
\end{align*}
\end{prop}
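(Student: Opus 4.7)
The plan is to combine a homotopy pullback decomposition of $\Twist_{n-t}^{G\times H}(*)$ with a K\"unneth-style argument of the same flavor as Proposition \ref{kunneth}.

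First I would exploit that $G\times H$ is abelian. Every conjugacy class in $\hom(\Z_{p}^{n-t},G\times H)$ is then a singleton, and the indexing set splits as
\[
\hom(\Z_{p}^{n-t},G\times H)\cong \hom(\Z_{p}^{n-t},G)\times\hom(\Z_{p}^{n-t},H).
\]
Summand by summand, Proposition \ref{prod} gives $T(\al_G\times\al_H)\cong T(\al_G)\times_{\QZ^{n-t}} T(\al_H)$, and because the maps $T(\al_G)\lra{}\QZ^{n-t}$ and $T(\al_H)\lra{}\QZ^{n-t}$ are surjective, passing to classifying spaces converts this into a homotopy pullback $BT(\al_G)\times^{h}_{B\QZ^{n-t}}BT(\al_H)$. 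Distributing coproducts across the pullback I would obtain
\[
\Twist_{n-t}^{G\times H}(*)\simeq \Twist_{n-t}^{G}(*)\times_{B\QZ^{n-t}} \Twist_{n-t}^{H}(*).
\]

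Next I would rerun the argument of Proposition \ref{kunneth} almost verbatim to obtain
\[
\LE^{*}\bigl(\Twist_{n-t}^{G}(*)\times_{B\QZ^{n-t}} \Twist_{n-t}^{H}(*)\bigr)\cong \LE^{*}(\Twist_{n-t}^{G}(*))\otimes_{\LE^{*}(B\QZ^{n-t})} \LE^{*}(\Twist_{n-t}^{H}(*)).
\]
By Remark \ref{T-action}, $\Twist_{n-t}^{H}(*)$ is, up to $p$-completion, the Borel construction of a $\mathbb{T}=(S^{1})^{\times n-t}$-space. So both sides, viewed as functors of $\Twist_{n-t}^{G}(*)$ (again thought of as a Borel construction), define cohomology theories in the $\mathbb{T}$-space variable: the first because pullback along the fibration $\Twist_{n-t}^{H}(*)\lra{} B\QZ^{n-t}$ commutes with homotopy colimits, and the second because Example \ref{Es} combined with Weierstrass preparation presents each $\LE^{*}(BT(\al_H))$ as a finitely generated \emph{free} module over $\LE^{*}(B\QZ^{n-t})$, hence $\LE^{*}(\Twist_{n-t}^{H}(*))$ is free as well (it is a finite disjoint union of such). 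Both functors agree on orbits $\mathbb{T}/A$ for each closed $A\subseteq\mathbb{T}$, which is checked one factor at a time, so they agree everywhere.

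Finally I would apply $B_t\otimes_{\LE^{*}(B\QZ^{n-t})}(-)$ to the isomorphism above. Because $\Twist_{n-t}^{e}(*)\cong B\QZ^{n-t}$ we have $B_{t}^{*}(\Twist_{n-t}^{e}(*))=B_t$, and the standard identity $M\otimes_{R}N\cong(B_t\otimes_{R}M)\otimes_{B_t}(B_t\otimes_{R}N)$ after extension of scalars rearranges the tensor product into
\[
B_{t}^{*}(\Twist_{n-t}^{G\times H}(*))\cong B_{t}^{*}(\Twist_{n-t}^{G}(*))\otimes_{B_t} B_{t}^{*}(\Twist_{n-t}^{H}(*)),
\]
which is the claim. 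The main obstacle is the K\"unneth step: one has to confirm that the flatness assertion underpinning the Brown-representability argument of Proposition \ref{kunneth} genuinely transfers to $\Twist_{n-t}^{H}(*)$ in place of $BT(\gamma_k)$. Example \ref{Es} is precisely what makes this transfer safe, as it supplies the explicit finite free presentation of $\LE^{*}(BT(\al))$ over $\LE^{*}(B\QZ^{n-t})$.
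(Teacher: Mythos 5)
Your proposal is correct and follows essentially the same route as the paper: the component-wise splitting via Proposition \ref{prod} giving $\Twist_{n-t}^{G\times H}(*)\simeq \Twist_{n-t}^{G}(*)\times_{B\QZ^{n-t}}\Twist_{n-t}^{H}(*)$, then the K\"unneth argument of Proposition \ref{kunneth} using the fibration over $B\QZ^{n-t}$ and the finitely generated free module structure, followed by base change to $B_t$. The only cosmetic difference is that you invoke freeness of $\LE^{*}(\Twist_{n-t}^{H}(*))$ while the paper cites it for the $G$-factor, which is symmetric and immaterial.
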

\begin{proof}
This follows from the chain of homotopy equivalences
\begin{align*}
\Twist_{n-t}^{G \times H}(*) &\simeq \Coprod{\al \in \hom(\Z_{p}^{n-t},G\times H)} BT(\al) \\
&\simeq \Coprod{\al \in \hom(\Z_{p}^{n-t},G\times H)} B(T(\al_G) \times_{\QZ^{n-t}} T(\al_H)) \\
&\simeq \Coprod{\al_G \in \hom(\Z_{p}^{n-t},G)} BT(\al_G) \times_{B\QZ^{n-t}} \Coprod{\al_H \in \hom(\Z_{p}^{n-t},H)} BT(\al_H) \\
&\simeq \Twist_{n-t}^G(*) \times_{\Twist_{n-t}^e(*)} \Twist_{n-t}^H(*).
\end{align*}
The result now follows from the fact that the map
\[
\Twist_{n-t}^G(*) \lra{} \Twist_{n-t}^e(*)
\]
is a fibration and the cohomology of the domain is a finitely generated free module over the cohomology of the codomain just as in the proof of Proposition \ref{kunneth}.
\end{proof}

The above propositions together with the work in \cite{tgcm} establish the main theorem.
\begin{thm}
The transchromatic twisted character map $\Upsilon_{G}$ induces an isomorphism when tensored up to $B_t$
\[
B_t \otimes_{\E^0} \Upsilon_G:B_t \otimes_{\E^0} \E^*(EG \times_G X) \lra{\cong} B_{t}^{*}(\Twist_{n-t}^{G}(X)).
\]
\end{thm}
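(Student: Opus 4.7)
The plan is to follow the reduction strategy from \cite{tgcm}. The two propositions immediately preceding the main theorem, together with the Künneth-type result for $B_t^*(\Twist_{n-t}^{G\times H}(*))$, provide exactly the inputs needed to reduce the general case in four stages to the computation of $\Upsilon_{\Zp{k}}(*)$, which was handled just before this section.

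First, I would use the fact that $\Twist_{n-t}(-)$ preserves pushouts and geometric realizations of simplicial finite $G$-CW complexes to show that both sides of $B_t \otimes_{\E^0} \Upsilon_G$ define Borel equivariant cohomology theories on finite $G$-CW complexes. A Bousfield--Kan spectral sequence argument (run exactly as in the analogous passage in \cite{tgcm}) then reduces the statement to $G$-CW complexes whose cells are of the form $G/A$ with $A$ an abelian subgroup of $G$. Next, a Mayer--Vietoris / induction on cells reduces us to showing that $B_t \otimes_{\E^0} \Upsilon_G(G/A)$ is an isomorphism for each such $A$. Using the earlier equivalence
\[
\Twist_{n-t}^G(G/A) \simeq \Twist_{n-t}^A(*)
\]
and the corresponding behavior of the source on induced spaces, this in turn reduces to showing the isomorphism on $BA$ for $A$ a finite abelian group.

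Third, writing a finite abelian $A$ as a product of cyclic $p$-groups (and, if $p' \neq p$, cyclic $p'$-groups whose classifying spaces contribute trivially to $\Twist_{n-t}$), I would apply the Künneth-type isomorphism just proved to reduce to the case $A = \Zp{k}$. On the source side, the standard Künneth isomorphism for Morava $E$-theory of classifying spaces of finite abelian groups handles the corresponding reduction. Once both sides are multiplicative in products of finite abelian groups, checking the isomorphism on each factor suffices.

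Finally, for $G = \Zp{k}$ and $X = *$, the theorem just proved identifies $\Upsilon_{\Zp{k}}(*)$ with the global sections of the $p^k$-torsion of the map of \pdiv groups
\[
\G_0 \oplus_{\Z_p^{n-t}} \Q_p^{n-t}[p^k] \lra{} \G_{\E}[p^k]
\]
over $\Spf_{I_t+(\bq)}(B_t) \to \Spec(\E^0)$. After base change to $B_t$, this is exactly the defining isomorphism of $B_t$ from Proposition~2.4 (or rather its $p^k$-torsion), hence an isomorphism. The main obstacle in the whole argument is verifying that the codomain really is a cohomology theory with the necessary Künneth property — but this is precisely the content of Proposition~\ref{kunneth} and the Künneth proposition above, whose proofs use the infinite-torus action of Remark~\ref{T-action} to reduce to checking on orbits $\mathbb{T}/A$. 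Once these structural properties are in hand, the reductions go through essentially verbatim from \cite{tgcm}, and the isomorphism on $\Zp{k}$ closes the induction.
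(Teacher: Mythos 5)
Your proposal follows essentially the same route as the paper: the paper likewise reduces via the two propositions on $\Twist_{n-t}(-)$ commuting with pushouts and geometric realizations, the Bousfield--Kan/Mayer--Vietoris argument imported from \cite{tgcm} to get to finite abelian groups, the K\"unneth proposition to get to cyclic $p$-groups, and finally the identification of $\Upsilon_{\Zp{k}}(*)$ with the global sections of the $p^k$-torsion map, which is an isomorphism after base change by the defining property of $B_t$. Your write-up is consistent with the paper's (much terser) proof and fills in the same intermediate steps the paper delegates to \cite{tgcm}.
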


\subsection{Relation to \cite{tgcm}}
There is a canonical topologically-induced quotient map $B_t \lra{} \Chat$ and a canonical natural transformation 
\[
EG \times_G \Fix_{n-t}(-) \lra{} \Twist_{n-t}(-). 
\]
This can be used to recover the character map of \cite{tgcm}.

The quotient map was described in Section \ref{relationtoC_t}. Recall that
\[
EG\times_G \Fix_{n-t}(X) \simeq \Coprod{[\al] \in \hom(\Z_{p}^{n-t},G)/\sim} EC(\im \al) \times_{C(\im \al)} X^{\im \al}.
\]
The natural transformation is induced by the inclusion 
\[
C(\im \al) \hookrightarrow T(\al).
\]
We can map $EG\times_G \Fix_{n-t}(X)$ to $\Twist_{n-t}(X)$ on components via the inclusion above. Putting the map of rings and the map of spaces together we get the map of equivariant cohomology theories
\begin{align*}
B_{t}\otimes_{\LE^0(B\QZ^{n-t})} \LE^*(&\Twist_{n-t}(X)) \\ \lra{} & \Chat \otimes_{\LE^0} \LE^*(EG\times_G \Fix_{n-t}(X)).
\end{align*}
Note that $\G_0 \oplus \QZ^{n-t}$ is the pullback: 
\[
\xymatrix{\G_0 \oplus \QZ^{n-t} \ar[r] \ar[d] & \G_0\oplus_{\Z_{p}^{n-t}}\Q_{p}^{n-t} \ar[d] \\ \Spf_{I_t}(\Chat) \ar[r] & \Spf_{I_t + (\bq)}(B_t).}
\]
This implies that the composite
\[
\E^*(EG \times_G X) \lra{} B_{t}^{*}(\Twist_{n-t}(X)) \lra{} \Chat^{*}(EG\times_G \Fix_{n-t}(X))
\]
recovers a completed version of the character map $\Phi_G$ of \cite{tgcm}.
\bibliographystyle{abbrv}
\bibliography{mybib}

\end{document}